\documentclass[11pt,reqno]{amsart}

\title[]{Global solutions for the Muskat problem in the scaling invariant Besov space $\dot B^1_{\infty, 1}$}

\author{Huy Q. Nguyen}
\address{Department of Mathematics, Brown University, Providence, RI 02912}
\email{hnguyen@math.brown.edu}

\usepackage[margin=1in]{geometry}
\usepackage{amsmath, amsthm, amssymb, mathrsfs, stmaryrd}
\usepackage{times}
\usepackage{color}
\usepackage{hyperref}

\usepackage[utf8]{inputenc}
\usepackage[T1]{fontenc}

\parskip=5pt
\parindent=0pt
\newcommand{\bq}{\begin{equation}}
\newcommand{\eq}{\end{equation}}
\newcommand{\bqa}{\begin{eqnarray*}}
\newcommand{\eqa}{\end{eqnarray*}}

\theoremstyle{plain}
\newtheorem{theo}{Theorem}[section]
\newtheorem{prop}[theo]{Proposition}
\newtheorem{lemm}[theo]{Lemma}
\newtheorem{coro}[theo]{Corollary}

\newtheorem{defi}[theo]{Definition}
\theoremstyle{definition}
\newtheorem{rema}[theo]{Remark}

\DeclareMathOperator{\cnx}{div}
\DeclareMathOperator{\RE}{Re}

\DeclareMathOperator{\supp}{supp}

\DeclareSymbolFont{pletters}{OT1}{cmr}{m}{sl}
\DeclareMathSymbol{s}{\mathalpha}{pletters}{`s}


\def\eps{\varepsilon}
\def\na{\nabla}

\def\lb{\llbracket}
\def\rb{\rrbracket}
\def\les{\lesssim}

\def\mez{\frac{1}{2}}
\def\tdm{\frac{3}{2}}

\def\Rr{\mathbb{R}}

\def\Zz{\mathbb{Z}}
\def\Cc{\mathbb{C}}

\def\cF{\mathcal{F}}
\def\cK{\mathcal{K}}

\def\cP{\mathcal{P}}
\def\cT{\mathcal{T}}
\def\cV{\mathcal{V}}
\def\ld{\lambda}

\def\p{\partial}
\def\na{\nabla}

\def\ka{\kappa}

\def\wt{\widetilde}

\def\B{\dot B_{\infty, 1}}

\numberwithin{equation}{section}

\pagestyle{plain}

\begin{document}
\begin{abstract}
The one-phase and two-phase Muskat problems with arbitrary viscosity contrast are studied in all dimensions. They are quasilinear parabolic equations for the graph free boundary. We prove that small data in the scaling invariant homogeneous Besov space $\dot B^1_{\infty, 1}$ lead to unique global solutions. The proof exploits a new structure of the Dirichlet-Neumann operator which allows us to implement a robust fixed-point argument. As a consequence of this method, the initial data is only assumed to be in $\dot B^1_{\infty, 1}$ and the solution map is Lipschitz continuous in the same topology. For the general Muskat problem, the only known scaling invariant result was obtained in the Wiener algebra (plus an $L^2$ assumption) which is strictly contained in $\B^1$. 
\end{abstract}

\keywords{Muskat problem, viscosity contrast, global well-posedness, Besov spaces}

\noindent\thanks{\em{ MSC Classification: 35R35, 35Q35, 35A01, 35A02.}}

\maketitle
\section{Introduction}

\subsection{The Muskat problem} 
The two-phase Muskat problem concerns the dynamics of the free interface between two immiscible fluids permeating a porous medium. The fluids have different constant densities $\rho^\pm$ and, in general, different viscosities $\mu^\pm$.  Assume that the free interface is the graph of a time-dependent function $\eta(x, t)$, i.e.
\[
\Sigma_t=\{(x, \eta(x, t)): x\in \Rr^d\},
\]
 so that the fluids occupy the regions 
\[\label{Omega+}
 \Omega^+_t=\{(x, y)\in \Rr^{d+1}: y>\eta(x, t)\},\quad \Omega^-_t=\{(x, y)\in \Rr^{d+1}: y<\eta(t, x)\}.
\]
The fluid motion is modeled by Darcy's law:
\bq\label{Darcy:pm}
\mu^\pm u^\pm+\nabla_{x, y}p^\pm=-(0, \rho^\pm),\quad \cnx_{x, y} u^\pm =0\quad \text{in}~\Omega^\pm_t,
\eq
Here $u^\pm$ are the velocity fields, $p^\pm$ are the pressure fields, and we have normalized  the gravity  constant to $1$ for notational simplicity.  The fluids are assumed  to be quiescent at great depths,
\bq
\lim_{y\to \pm \infty}u^\pm(x, y)=0.
\eq
The normal velocity is continuous across the interface, i.e.
\bq\label{u.n:pm}
u^+\cdot n=u^-\cdot n\quad\text{on}~\Sigma_t\quad\text{on } \Sigma_t,
\eq
where $n=\frac{1}{\sqrt{1+|\nabla \eta|^2}}(-\nabla \eta, 1)$ is the upward pointing unit normal to $\Sigma_t$. Then the interface moves with the fluids:
\bq\label{kbc:pm}
\p_t\eta=\sqrt{1+|\nabla \eta|^2}u^-\cdot n\vert_{\Sigma_t}.
\eq
Neglecting the effect of surface tension, the pressure is continuous at the interface, i.e.
\bq\label{p:pm}
p^+=p^-\quad\text{on}~\Sigma_t.
\eq
We shall refer to the system \eqref{Omega+}-\eqref{p:pm} as the two-phase Muskat problem. When the top phase corresponds to vacuum, i.e. $\mu^+=\rho^+=0$, the two-phase Muskat problem reduces to the one-phase Muskat problem, in which case \eqref{p:pm} becomes
\bq
p^-=0\quad\text{on}~\Sigma_t.
\eq    
\subsection{Main result} 
A remarkable feature of the Muskat problem is that it can be recast as a {\it quasilinear} nonlocal parabolic equation for the free boundary $\eta$.  When the initial data is sufficiently smooth, local well-posedness for large data and global well-posedness for small data were established in \cite{DuchonRobert1984, Chen, EscherSimonett1997,  ConPug, CorGan09, Amb0, Amb, CorCorGan, CorCorGan2, CheGraShk}. It turns out that the problem is invariant with respect to the rescaling 
\[
\eta(x, t)\to \ld^{-1} \eta(\ld x, \ld t)\quad\forall \ld>0.
\]
Therefore, it is natural to develop well-posedness theories in function spaces that respect this scaling invariance. These include, among others, the  Lipschitz space $\dot W^{1, \infty}(\Rr^d)$, the Wiener algebra $\mathbb{A}^1(\Rr^d)$, and the homogeneous Besov space $\dot B^{1+\frac{d}{p}}_{p, q}(\Rr^d)$ for all $p$, $q\in [1, \infty]$. The latter includes the $L^2$-based Sobolev space $\dot H^{1+\frac{d}{2}}(\Rr^d)$ as a special case. 

There have been significant developments on well-posedness theories in or close to scaling invariant spaces.  Local well-posedness for large data arbitrarily close to scaling (``scaling$+\eps$'') is quite well understood. The case of no viscosity contrast ($\mu^+=\mu^-$) was treated in \cite{ConGanShvVic} with $W^{2, p}(\Rr)$ data for all $p\in (1, \infty]$, and in \cite{Matioc} with $H^{\tdm+\eps}(\Rr)$ data. The latter was then improved to the homogeneous space $\dot H^1(\Rr)\cap \dot H^{\tdm +}(\Rr)$ \cite{AL}, which is natural since the PDE annihilates constants. For {\it arbitrary viscosity contrast} and in {\it all dimensions}, local well-posedness was obtained in \cite{NguyenPausader} for $H^{1+\frac{d}{2}+\eps}(\Rr^d)$ data. See also \cite{AL} for the one-phase case. The method developed in \cite{NguyenPausader} could also handle the effect of surface tension \cite{Nguyen} and the zero surface tension limit \cite{FlynnNguyen} at the same regularity level. The only existing scaling invariant (modulo low-frequency assumptions) existence and uniqueness results are \cite{AlazardNguyen3, AlazardNguyen4} for the case of {\it no viscosity contrast}. They were obtained for data in $H^\tdm(\Rr)$ and $W^{1, \infty}(\Rr^2)\cap \dot H^2(\Rr^2)$, respectively.  

On the other hand, global well-posedness for small data is more  developed owing to the parabolic nature of the Muskat problem. The first result of this type is \cite{ConCorGanStr} which considers the equal viscosities case with data in the subcritical space $H^3(\Rr)$ but small (with explicit bound) in the Wiener norm 
\bq\label{def:Wiener}
\| \eta_0\|_{\mathbb{A}^1}=\int_\Rr |\xi||\widehat{f}(\xi)|d\xi.
\eq
This Wiener framework was then sucessfully employed to study a variety of problems such as the Navier-Stokes equations \cite{LeiLin}, the  Landau and non-cutoff Boltzmann equations \cite{DLSS},  the Peskin problem \cite{GJMS}. The results in \cite{ConCorGanStr} were extended to 3D in \cite{ConCorGanRodPStr}. Smallness in $\mathbb{A}^1$ implies smallness of the slope, i.e. of $\| \eta_0\|_{\dot W^{1, \infty}}$. Still for the case of no viscosity contrast,  if the initial Lipschitz norm is smaller than an explicit number and the profile grows sublinearly at infinity, then there exists a unique global classical solution \cite{Cam, Cam2}. Recent developments can address arbitrarily large slopes \cite{CordobaLazar, GancedoLazar, AlazardNguyen4} and even infinite slope \cite{AlazardNguyen3}. Precisely, \cite{CordobaLazar, GancedoLazar, AlazardNguyen4}  consider Lipschitz data that are small in the critical Sobolev space $\dot H^{1+\frac{d}{2}}(\Rr^d)$, $d\in \{1, 2\}$,  whereas the Lipschitz condition is removed in \cite{AlazardNguyen3} for $d=1$. As far as the general case of arbitrary viscosity contrast is concerned, to the best of our knowledge, the only scaling invariant result is \cite{GGJPS} for data in $L^2(\Rr^d)$ and small in $\mathbb{A}^1(\Rr^d)$, $d\in \{2, 3\}$. See also \cite{GG-JPS2019Bubble} for the global stability of bubbles. The case of viscosity contrast, including the one-phase problem, is more practical and mathematically challenging  since in addition to the evolution equation for the free boundary, one has to invert a nonlocal operator for the vorticity amplitude at the free boundary. Unlike the case of equal viscosities, there is no currently available $L^\infty$ or Lipschitz maximum principle when the viscosity contrast is nonzero. See \cite{GGJPS} for the equations and further discussions. For the one-phase problem, by taking advantage of  the maximum principle for the slope, it is proved in \cite{HGN} that for any periodic Lipschitz initial data, there exists a global strong solution  which is also the unique viscosity solution.

The purpose of this paper is to provide an alternative scaling invariant framework for the general Muskat problem: the Besov framework through the Dirichlet-Neumann operator.  To ensure the finiteness of the slope as in the Wiener framework, we consider the scaling invariant Besov spaces $\dot B^{\frac{d}{p}+1}_{p, 1}(\Rr^d)$, $p\in [1, \infty]$. The {\it largest} space in this class is $\dot B^1_{\infty, 1}(\Rr^d)$ which {\it strictly contains} $\mathbb{A}^1(\Rr^d)$ and 
\bq\label{intro:compare}
 \| \cdot\|_{\B^1}\le C\| \cdot\|_{\mathbb{A}^1}.
\eq
For example,  the function $f(x)=\cos(\sqrt{2}x)+\cos x$ belongs to $\dot B^1_{\infty, 1}(\Rr)$ but not to $\mathbb{A}^1$. Note that $f$ is neither periodic nor decaying at infinity. 

We establish global well-posedness for small data in $\B^1$  by exploiting a {\it semilinear-like structure} of the problem for such data. Our starting point is the reformulation \cite{NguyenPausader} of the general Muskat problem in terms of the Dirichlet-Neumann operators $G^\pm(\eta)$ associated to the fluid domains $\Omega^\pm$.  Precisely,  for a given function $f$, if $\phi^\pm$ solve 
\bq\label{potential:phi}
\begin{cases}
\Delta_{x, y} \phi^\pm=0\quad\text{in}~\Omega^\pm,\\
\phi^\pm=f\quad\text{on}~\Sigma,\\
\na_{x, y}\phi\to 0\quad\text{as } y\to \pm\infty,
\end{cases}
\eq
then 
\bq
G(\eta)^\pm f:=\sqrt{1+|\nabla \eta|^2}\frac{\p\phi^\pm}{\p n}.
\eq
\begin{prop}[\protect{\cite{NguyenPausader}}]\label{reform} Let $d\ge 1$.\\
1. $(u, p, \eta)$ solve the one-phase Muskat problem if and only if $\eta:\Rr^d\to \Rr$  obeys the equation 
\bq
\p_t\eta+\frac{\rho^-}{\mu^-} G^{-}(\eta)\eta=0.\label{eq:eta}
\eq
2.  $(u^\pm, p^\pm, \eta)$ solve the two-phase Muskat problem if and only if
\bq\label{eq:eta2p}
\p_t\eta=-\frac{1}{\mu^-}G^{-}(\eta)f^-,
\eq
where $f^\pm:=p^\pm\vert_{\Sigma}+\rho^\pm \eta$ satisfy
\bq\label{system:fpm}
\begin{cases}
 f^+-f^-= (\rho^+-\rho^-)\eta,\\
\frac{1}{\mu^+}G^+(\eta)f^+-\frac{1}{\mu^-}G^-(\eta)f^-=0.
\end{cases}
\eq
\end{prop}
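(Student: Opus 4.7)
The plan is to rewrite Darcy's law as a potential equation, identify the boundary values with the appropriate combinations $f^\pm$, and then read off the evolution equation from the kinematic boundary condition via the definition of $G^\pm(\eta)$.

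For the one-phase case, I introduce the velocity potential
\[
\phi^-(x, y, t) := -\tfrac{1}{\mu^-}\bigl(p^-(x,y,t)+\rho^- y\bigr),
\]
so that Darcy's law \eqref{Darcy:pm} is equivalent to $u^- = \nabla_{x, y} \phi^-$. The incompressibility $\cnx_{x,y} u^- = 0$ then becomes $\Delta_{x, y}\phi^-=0$ in $\Omega^-_t$, and the quiescence at infinity yields $\nabla_{x,y}\phi^- \to 0$ as $y\to -\infty$. On $\Sigma_t$, the pressure condition $p^-=0$ gives the trace $\phi^-\vert_{\Sigma_t} = -\tfrac{\rho^-}{\mu^-}\eta$. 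Thus $\phi^-$ is exactly the harmonic extension in \eqref{potential:phi} corresponding to Dirichlet data $-\tfrac{\rho^-}{\mu^-}\eta$, and the definition of $G^-(\eta)$ combined with the kinematic condition \eqref{kbc:pm} gives
\[
\p_t\eta = \sqrt{1+|\na\eta|^2}\,u^-\cdot n\big\vert_{\Sigma_t}= \sqrt{1+|\na\eta|^2}\,\tfrac{\p\phi^-}{\p n}\big\vert_{\Sigma_t} = G^-(\eta)\bigl(-\tfrac{\rho^-}{\mu^-}\eta\bigr) = -\tfrac{\rho^-}{\mu^-}G^-(\eta)\eta,
\]
using linearity of $G^-(\eta)$ in its argument. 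The reverse implication is obtained by reconstructing $p^-$ and $u^-$ from $\phi^-$ and verifying each condition in turn.

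For the two-phase case, I carry out the same reduction on each side: set $\phi^\pm := -\tfrac{1}{\mu^\pm}(p^\pm + \rho^\pm y)$, so $u^\pm = \nabla_{x, y}\phi^\pm$, $\Delta_{x, y}\phi^\pm = 0$ in $\Omega^\pm_t$, and $\phi^\pm\vert_{\Sigma_t} = -\tfrac{1}{\mu^\pm}f^\pm$ with $f^\pm = p^\pm\vert_{\Sigma_t} + \rho^\pm\eta$. The pressure matching \eqref{p:pm} translates directly into $f^+-f^- = (\rho^+-\rho^-)\eta$, the first equation in \eqref{system:fpm}. The kinematic identity \eqref{u.n:pm} on the normal velocity, multiplied by $\sqrt{1+|\na\eta|^2}$, reads $G^+(\eta)(\phi^+\vert_{\Sigma_t}) = G^-(\eta)(\phi^-\vert_{\Sigma_t})$, which upon substituting the traces becomes the second equation in \eqref{system:fpm}. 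The evolution equation \eqref{eq:eta2p} then follows from \eqref{kbc:pm} exactly as in the one-phase argument.

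The computation is essentially a change of unknown, so there is no real obstacle beyond bookkeeping; the one subtle point I would pay attention to is the direction of the unit normal $n$ (pointing from $\Omega^-_t$ into $\Omega^+_t$) and the corresponding sign convention for $G^\pm(\eta)$, since these determine whether the Dirichlet–Neumann operators on the two sides carry the same or opposite sign. Once the signs are checked, both equivalences are immediate, and the converse direction (recovering $(u^\pm, p^\pm)$ from a solution $\eta$ of \eqref{eq:eta}--\eqref{system:fpm}) is obtained by solving the Dirichlet problems \eqref{potential:phi} and defining $p^\pm := -\mu^\pm\phi^\pm - \rho^\pm y$.
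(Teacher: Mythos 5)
The paper cites Proposition \ref{reform} from \cite{NguyenPausader} and does not reprove it, so there is no in-paper proof to compare against. Your derivation via the substitution $\phi^\pm = -\tfrac{1}{\mu^\pm}(p^\pm + \rho^\pm y)$ is the standard reduction and is correct, including the sign bookkeeping for $G^+(\eta)$ (where $n$ is the upward normal and hence inward for $\Omega^+$) that you correctly flagged as the one delicate point.
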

We denote throughout this paper 
\[
 \kappa=\frac{\lb \rho\rb}{\mu^++\mu^-},\quad \lb \rho\rb=\rho^--\rho^+.
\]
It will be shown that $\ka$ is the dissipation coefficient. 

 Our main result is the following global well-posedness in Hadamard's sense for small data in $\B^1(\Rr^d)$.
 \begin{theo}\label{theo:global}
Consider either the one-phase  or the  two-phase Muskat problem in the stable case $\rho^+<\rho^-$. Let $\eta_0\in  \dot B^1_{\infty, 1}(\Rr^d)$, $d\ge 1$, be an initial datum. Then there exist positive constants $\delta$ and $C$, both depending only on $d$, such that the following holds: if $\| \eta_0\|_{\dot B^1_{\infty, 1}}\le \delta$ then for any $T>0$, \eqref{eq:eta} and \eqref{eq:eta2p}-\eqref{system:fpm} have a unique solution $\eta\in C([0, T]; \dot B^1_{\infty, 1}(\Rr^d))$ satisfying
\bq
\| \eta\|_{\wt L^\infty([0, T]; \dot B^1_{\infty, 1})}+\ka \| \eta\|_{\wt L^1([0, T]; \dot B^2_{\infty, 1})}\le C\| \eta_0\|_{\B^1}.
\eq
Here $\wt L^q([0, T]; \B^s)$ denotes the Chemin-Lerner space. Moreover, 
if $\eta^1$ and $\eta^2$ are two solutions as described above, then for all $T>0$,
\bq
\| \eta^1-\eta^2\|_{\wt L^\infty([0, T]; \dot B^1_{\infty, 1})}+\ka \| \eta^1-\eta^2\|_{\wt L^1([0, T]; \dot B^2_{\infty, 1})}\le C\| \eta^1(0)-\eta^2(0)\|_{\B^1}.
\eq 
\end{theo}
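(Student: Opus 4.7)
The plan is to recast the problem as a semilinear heat-like equation at the critical regularity and then execute a Picard iteration in a Chemin-Lerner space adapted to the parabolic scaling. Starting from Proposition~\ref{reform}, I would split the Dirichlet-Neumann operator as $G^\pm(\eta)=|\na|+R^\pm(\eta)$, where $R^\pm(\eta)$ is the nonlinear remainder that vanishes when $\eta\equiv 0$. For the one-phase problem this immediately gives
\[
\p_t\eta + \frac{\rho^-}{\mu^-}|\na|\eta = -\frac{\rho^-}{\mu^-}R^-(\eta)\eta.
\]
For the two-phase case, the auxiliary unknowns $f^\pm$ must first be eliminated from \eqref{system:fpm}: writing $G^\pm(\eta)=|\na|+R^\pm(\eta)$ and $f^-=f^+ + \lb\rho\rb\eta$, the second equation in \eqref{system:fpm} becomes a perturbation of a linear relation whose leading operator is a multiple of $|\na|$. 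Inverting this relation expresses $f^-$ as $-\tfrac{\mu^-\lb\rho\rb}{\mu^++\mu^-}\eta$ plus quadratic and higher order terms in $\eta$. Substituting into \eqref{eq:eta2p} puts the evolution in the common form
\bq\label{plan:model}
\p_t\eta + \ka|\na|\eta = \cN(\eta),
\eq
with $\cN$ at least quadratic in $\eta$.

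I would then fix $T>0$ and work in the Chemin-Lerner space
\[
X_T=\wt L^\infty([0,T];\B^1)\cap \wt L^1([0,T];\B^2),
\]
with the second piece weighted by $\ka$. Maximal regularity for the fractional heat semigroup applied to each Littlewood-Paley block yields $\|e^{-\ka t|\na|}\eta_0\|_{\wt L^\infty_T\B^1}+\ka\|e^{-\ka t|\na|}\eta_0\|_{\wt L^1_T\B^2}\les \|\eta_0\|_{\B^1}$. Combined with Duhamel's formula for \eqref{plan:model}, the fixed point reduces to proving the nonlinear estimate
\[
\|\cN(\eta)\|_{\wt L^1_T\B^1}\le C(\|\eta\|_{X_T})\|\eta\|_{X_T}^2,
\]
together with the matching difference bound. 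Smallness of $\|\eta_0\|_{\B^1}$ then produces a unique small solution in $X_T$ for every $T>0$ via Banach contraction, and the Lipschitz stability statement is the same difference estimate applied to two solutions.

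The principal obstacle is the nonlinear bound at the endpoint index $1$ in $\B^1$. The structural gain to be exploited is that each occurrence of $R^\pm(\eta)$ in the expansion of the Dirichlet-Neumann operator (and in the inverse used to solve the two-phase system) is bilinear in $(\eta,\cdot)$ modulo smaller errors, and behaves essentially as a paraproduct costing only one derivative, which is paid by one of the $\eta$ factors. I would therefore aim for a schematic estimate of the form
\[
\|R^\pm(\eta)f\|_{\B^1}\les \|\eta\|_{\B^1}\|f\|_{\B^2},
\]
so that placing one factor of $\eta$ in $\wt L^\infty_T\B^1$ and the other in $\wt L^1_T\B^2$ closes the contraction. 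Establishing such bounds uniformly in $\eta$ small in $\B^1$, with the correct dependence on higher-order terms of the DN expansion and on the inverse arising in the two-phase reduction, is where the \emph{new structure} of $G^\pm(\eta)$ advertised in the abstract must carry the argument, and is where most of the technical effort lies.
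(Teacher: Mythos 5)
Your proposal follows essentially the same architecture as the paper: linearize $G^\pm(\eta)$ about the flat interface to expose the dissipative term $\ka|D_x|\eta$, treat the remainders $R^\pm(\eta)$ as a ``semilinear'' nonlinearity with a bilinear $\dot B^1_{\infty,1}\times\dot B^2_{\infty,1}\to\dot B^1_{\infty,1}$ estimate, eliminate $f^-$ in the two-phase case through a small fixed point at each time, and close a Picard iteration in $\wt L^\infty_T\dot B^1_{\infty,1}\cap\wt L^1_T\dot B^2_{\infty,1}$; this is precisely the paper's Lemmas~\ref{lemm:fixedpoint} and~\ref{proof:onephase} together with Propositions~\ref{lemm:elliptic}, \ref{prop:contraction} and~\ref{prop:f-}. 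One sign issue in your two-phase reduction should be corrected: with the paper's normalization $G^+(\eta)$ is a nonpositive operator and linearizes to $-|D_x|$, not $+|D_x|$, so the uniform decomposition you wrote, $G^\pm(\eta)=|\nabla|+R^\pm(\eta)$, is off by a sign for $G^+$; tracing this through \eqref{eq:f-:0}, the leading part of $f^-$ is $+\frac{\mu^-\lb\rho\rb}{\mu^++\mu^-}\eta$, not $-\frac{\mu^-\lb\rho\rb}{\mu^++\mu^-}\eta$. Taken literally, your two claims are inconsistent (the minus sign would make $\p_t\eta=-\frac{1}{\mu^-}|D_x|f^-$ anti-dissipative), but the fix is purely bookkeeping and, once made, recovers the form $\p_t\eta+\ka|D_x|\eta=\cN(\eta)$ that the paper obtains in \eqref{PDE:twophase}.
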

Theorem \ref{theo:global} does not require any  additional assumption on low frequencies of initial data. This is natural since for any constant $c$, $\eta+c$ is a solution if $\eta$ is a solution. This and the Lipschitz continuity of the solution map in $\B^1$ follow as a direct consequence of the fact that our solutions are constructed by means of a {\it fixed-point argument}. Since we do not rely on explicit contour equations \cite{CorGan09, GGJPS}, the proof of Theorem \ref{theo:global} works regardless of the dimension and {\it extends to the periodic setting}. We stress that the smallness $\delta$ is independent of all parameters of the problem. 
\begin{rema}
It would be interesting to investigate the long time decay and instant analyticity of the solutions constructed in Theorem \ref{theo:global}. However, in order to elucidate the main ideas, we do not pursue these issues in the current paper. 
\end{rema}
\subsection{The method of proof}
By virtue of Proposition \ref{reform}, the analysis of the Muskat problem reduces to that of the Dirichlet-Neumann operator. To fix ideas let us consider first the one-phase equation \eqref{eq:eta} in which the nonlinearity $G^-(\eta)\eta$ is fully nonlinear. In \cite{NguyenPausader},  a precise structure of $G^-(\eta)f$ is required  to handle large data. Precisely, we proved that for subcritical Sobolev regularity, $G^-(\eta)f$ can be decomposed into a first order elliptic (paradifferential) operator acting on $f$ plus a transport (paradifferential)  operator acting on $\eta$, and a lower order remainder. On the other hand, for small data, it turns out that the following different but simpler structure suffices
\bq\label{intro:expandDN}
G^-(\eta)f=|D_x|f+R^-(\eta)f,
\eq
where $R^-(\eta)f$ contains only quadratic and higher nonlinearities. The definition of $R^-(\eta)f$ involves $\eta$ and $v:\Rr^d\times \Rr_-\to \Rr$, where the latter is the harmonic potential $\phi$, defined by \eqref{potential:phi}, in the flattened domain $\{(x, y): y<0\}$. We shall prove that $v$ can be found as the unique fixed point in Chemin-Lerner spaces of some operator $\cT$ depending on $\eta$ (see \eqref{def:cT}), provided that $\eta$ is small in $\B^1$. Moreover, $R^-(\eta)f$ satisfies the good  Besov bound
\bq\label{intro:estR}
\begin{aligned}
\| R^-(\eta)f\|_{\dot B^1_{\infty, 1}}&\le C\|  \eta\|_{\dot B^1_{\infty, 1}}\|f\|_{\dot B^{2}_{\infty, 1}}+C\|  \eta\|_{\dot B^{2}_{\infty, 1}}\| f\|_{\dot B^1_{\infty, 1}}.
\end{aligned}
\eq
Interestingly, at the same $\B^1$ regularity, $R^-(\eta)f$ has  the contraction property 
\bq\label{intro:contractionR}
\begin{aligned}
  &\| [R^{-}(\eta_1)-R^{-}(\eta_2)]f\|_{\dot B^1_{\infty, 1}}\\
& \le  C\|(\eta_1, \eta_2)\|_{\dot B^{2}_{\infty, 1}}\| f\|_{\dot B^1_{\infty, 1}}+C\| \eta_\delta\|_{\dot B^1_{\infty, 1}}\|f\|_{\dot B^{2}_{\infty, 1}}+C\| \eta_\delta\|_{\dot B^{2}_{\infty, 1}}\|f\|_{\dot B^1_{\infty, 1}},\quad \eta_\delta=\eta_1-\eta_2.
 \end{aligned}
 \eq
Now in view of \eqref{intro:expandDN}, the one-phase equation \eqref{eq:eta} can be written in the Duhamel form 
\bq\label{intro:Duhamel}
\eta(t)=e^{-\ka t|D_x|}\eta_0-\ka \int_0^t e^{-\ka (t-\tau)|D_x|}(R^-(\eta)\eta)(\tau)d\tau.
\eq
This is reminiscent of mild solutions for the semilinear Navier-Stokes equations \cite{FujitaKato, CanMeyPlan} but  our nonlinearity $R^-(\eta)\eta$ is implicit and not purely quadratic. Nevertheless, by virtue of the boundedness \eqref{intro:estR} and the contraction \eqref{intro:contractionR} of $R^-(\eta)f$, a unique fixed point $\eta$ of \eqref{intro:Duhamel} can be easily obtained in the space  $\wt L^\infty([0, T]; \B^1)\cap \wt L^1([0, T]; \B^2)$ for any $T>0$, provided that $\| \eta_0\|_{\B^1}\le \delta(d)\ll 1$. In other words, for small data, the quasilinear Muskat problem can be {\it treated as a  semilinear equation}. The above fixed point argument allows us to bypass any additional assumption on low frequencies of data (for instance, $\eta_0\in L^2$  in \cite{GGJPS}) and prove at once the Lipschitz continuity of the solution map in the top topology of $\B^1$.

 Regarding the two-phase problem, owing to the linearization \eqref{intro:expandDN} we find that $f^-$, the solution of \eqref{system:fpm}, can be again obtained as the unique fixed point of some operator, provided that $\eta$ is small in $\B^1$. Moreover, $f^-$ has the same Besov regularity as $\eta$, so that the two-phase equation \eqref{eq:eta2p} can be analyzed exactly the same as the one-phase equation \eqref{eq:eta}. 

A review of homogeneous Besov spaces is given in Section \ref{section:prelim}. Section \ref{section:DN} is devoted to the linearization, boundedness and contraction properties of the Dirichlet-Neumann operator in Besov spaces. The proof of Theorem \ref{theo:global} is given in Section \ref{section:proof}. After the preliminaries in Section \ref{section:prelim}, the remainder of the paper is self-contained.
\section{A review of homogeneous Besov spaces}\label{section:prelim}
\subsection{Homogeneous Besov spaces}
\begin{prop}[\protect{\cite[Proposition 2.10]{BCD}}] Let $\mathcal{C}$ be the annulus $\{\xi\in \Rr^d: \frac 34\le |\xi|\le \frac 83\}$. There exists a  radial function $\psi$ valued in $[0, 1]$ and belonging  to $C_c^\infty(B(0, \frac 43))$  such that the following hold
\begin{align}
\label{sum2}&\sum_{j\in \Zz}\varphi(2^{-j}\xi)=1\quad\forall \xi\in \Rr^d\setminus\{0\},\\
&\supp \varphi(2^{-j}\cdot)\cap \supp \varphi(2^{-j'}\cdot)=\emptyset\quad\text{for}~|j-j'|\ge 2,\\
&\mez\le \sum_{j\in \Zz}\varphi^2(2^{-j}\xi)\le 1\quad\forall \xi\in \Rr^d\setminus\{0\}.
\end{align}
\end{prop}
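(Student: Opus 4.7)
The plan is to construct $\psi$ directly and then define $\varphi(\xi)=\psi(\xi/2)-\psi(\xi)$. A natural choice is a smooth radial cutoff $\psi\in C_c^\infty(B(0,4/3))$, non-increasing in $|\xi|$, with $0\le\psi\le 1$ and $\psi\equiv 1$ on $\overline{B(0,3/4)}$; such a function is easily manufactured by mollifying $\mathbf{1}_{B(0,r)}$ for some intermediate radius $r\in(3/4,4/3)$ against a non-negative radial bump of sufficiently small support. With this choice, $\varphi$ vanishes on $\{|\xi|\le 3/4\}$ (both terms equal $1$) and on $\{|\xi|\ge 8/3\}$ (both terms vanish), so $\supp\varphi\subset\mathcal{C}$, while monotonicity of $\psi$ ensures $\varphi\ge 0$ everywhere.

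For the partition-of-unity identity \eqref{sum2} I would exploit the telescoping identity
$$\sum_{j=-N}^N\varphi(2^{-j}\xi)=\psi(2^{-N-1}\xi)-\psi(2^N\xi),$$
and let $N\to\infty$: for any fixed $\xi\neq 0$, $\psi(2^{-N-1}\xi)\to\psi(0)=1$ by continuity, while $\psi(2^N\xi)=0$ for all $N$ large by compact support. The disjoint-support claim reduces to the inequality $\tfrac{8}{3}\cdot 2^j<\tfrac{3}{4}\cdot 2^{j'}$ whenever $j'\ge j+2$, i.e.\ $2^{j'-j}>\tfrac{32}{9}$, which holds because $2^2=4>\tfrac{32}{9}$. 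For the bounds $\mez\le\sum_j\varphi^2(2^{-j}\xi)\le 1$, set $a_j=\varphi(2^{-j}\xi)\in[0,1]$: the upper bound follows from $a_j^2\le a_j$ together with \eqref{sum2}, and for the lower bound the disjoint-support property forces at most two consecutive $a_j,a_{j+1}$ to be nonzero at any $\xi\ne 0$, so $a_j+a_{j+1}=1$ and Cauchy--Schwarz yields $a_j^2+a_{j+1}^2\ge\mez$.

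The only delicate step is producing a $\psi$ that is simultaneously smooth, radial, compactly supported in $B(0,4/3)$, identically $1$ on $\overline{B(0,3/4)}$, \emph{and} non-increasing in $|\xi|$; the mollification recipe above accomplishes this provided the mollifier's radius is chosen strictly smaller than the gap $\tfrac{4}{3}-\tfrac{3}{4}=\tfrac{7}{12}$ (so that the $1$-plateau persists and the support remains inside $B(0,4/3)$). Once such a $\psi$ is fixed, all three displayed identities reduce to the elementary calculations outlined above, and no further analysis is required.
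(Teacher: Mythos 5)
Your construction and verification are correct and follow the standard argument from Bahouri--Chemin--Danchin; the paper itself states this proposition as a citation (\cite[Proposition 2.10]{BCD}) and gives no proof, so there is nothing internal to compare against. The only tiny slip is quantitative: for the mollification of $\mathbf{1}_{B(0,r)}$ to remain identically $1$ on $\overline{B(0,3/4)}$ while staying supported in $B(0,4/3)$, the mollifier's radius $\varepsilon$ must satisfy $2\varepsilon < \tfrac{4}{3}-\tfrac{3}{4}=\tfrac{7}{12}$ (i.e.\ $\varepsilon$ less than \emph{half} the gap, with $r$ chosen between $\tfrac{3}{4}+\varepsilon$ and $\tfrac{4}{3}-\varepsilon$), not merely less than the gap itself; this does not affect the substance of the argument.
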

\begin{defi}
The Littlewood-Paley dyadic block $\dot \Delta_j$ is defined by the Fourier multiplier  $\dot \Delta_ju=\varphi(2^{-j}D_x)u$. The low-frequency cut-off operator $\dot S_j$ is defined by
\bq
\dot S_j=\sum_{j'\le j-1}\dot \Delta_{j'}.
\eq
\end{defi}
Let $\cP(\Rr^d)$ and  $\mathcal{S}'(\Rr^d)$ respectively denote the set of polynomials on $\Rr^d$ and  the space of tempered distributions on $\Rr^d$. It is well-known that
\[
\text{I}=\sum_{j\in \Zz}\dot \Delta_j \quad \text{in } \mathcal{S}'(\Rr^d)/\cP(\Rr^d).
\]
\begin{defi}
1) For $(p, r)\in [1, \infty]^2$ and $s\in \Rr$, the homogeneous Besov space $\dot B^s_{p, r}(\Rr^d)$ is the space of  $u\in \mathcal{S}'(\Rr^d)/\cP(\Rr^d)$ such that the following norm is finite 
\[
\| u\|_{\dot B^s_{p, r}(\Rr^d)}=\| 2^{sj}\| \dot \Delta_j u\|_{L^p(\Rr^d)}\|_{\ell^r(\Zz)}.
\]
2) For $I\subset \Rr$, $(p, q, r)\in [1, \infty]^3$ and $s\in \Rr$, the Chemin-Lerner norm is defined by
\[
\| u\|_{\wt L^q(I; \dot B^s_{p, r}(\Rr^d))}=\| 2^{sj}\| \dot \Delta_j u\|_{L^q(I; L^p(\Rr^d))}\|_{\ell^r(\Zz)}.
\]
We then define the Chemin-Lerner space $\wt L^q(I; \dot B^s_{p, r}(\Rr^d))$ to be the space of tempered distributions $u\in \mathcal{S}'(\Rr^{d+1})/\cP(\Rr^{d})$ and  $\| u\|_{\wt L^q(I; \dot B^s_{p, r}(\Rr^d))}<\infty$. 
\end{defi}
\begin{rema}
The inequality \eqref{intro:compare} holds since for $u\in \mathbb{A}^1$ we have
\begin{align*}
\| u\|_{\B^1}&=\sum_{j\in \Zz}2^j\| \dot \Delta_j u\|_{L^\infty(\Rr^d)}\le \sum_{j\in \Zz}2^j\| \psi_j \widehat{u}\|_{L^1(\Rr^d)}\\&\le\int_{\Rr^d}\sum_{j\in \Zz}2^j\psi_j(\xi)|\widehat{u}(\xi)|d\xi \le \frac43\int_{\Rr^d\setminus\{0\}}\sum_{j\in \Zz}\psi_j(\xi)|\xi||\widehat{u}(\xi)|d\xi=\frac43\| u\|_{\mathbb{A}^1}.
\end{align*}
\end{rema}
For all $s\in \Rr$ and $(p, q)\in [1, \infty]^2$, $\dot B^s_{p, q}(\Rr^d)$ is a Banach space (see \cite{Triebel}, page 240). The following proposition can be proved analogously to Theorem 2.25 \cite{BCD}.
\begin{prop}\label{prop:CLspace}
For all $s\in \Rr$  and $(p, q, r)\in [1, \infty]^3$, $\wt L^q(I;  B^s_{p, r}(\Rr^d))$ is a Banach space and satisfies the Fatou property:  for any sequence $(u_n)$ bounded in $\wt L^q(I;  B^s_{p, r}(\Rr^d))$, there exist a subsequence $(u_{n_k})$ and $u\in \wt L^q(I;  B^s_{p, r}(\Rr^d))$ such that 
\bq
\lim_{k\to \infty} u_{n_k}=u\quad\text{in } \mathcal{S}'(\Rr^{d+1})/\cP(\Rr^{d})\quad\text{and}\quad \|u\|_{\wt L^q(I;  B^s_{p, r})}\le \liminf_{k\to \infty} \|u_{n_k}\|_{\wt L^q(I;  B^s_{p, r})}.
\eq
\end{prop}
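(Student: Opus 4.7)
The plan is to follow closely the strategy used for Theorem 2.25 in \cite{BCD}, exploiting two structural facts. First, each block $\dot\Delta_j u$ has Fourier support in the fixed annulus $2^j\mathcal{C}$, so spectral localization is preserved under weak and weak-$\ast$ limits. Second, for a sequence bounded in $\wt L^q(I;\dot B^s_{p,r}(\Rr^d))$, the individual dyadic blocks form bounded sequences in $L^q(I;L^p(\Rr^d))$, to which Banach--Alaoglu applies.

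I would prove the Fatou property first, then deduce completeness from it. Let $(u_n)$ be bounded in $\wt L^q(I;\dot B^s_{p,r}(\Rr^d))$. For each fixed $j\in\Zz$, $\dot\Delta_j u_n$ is bounded in $L^q(I;L^p(\Rr^d))$. When $p,q\in[1,\infty)$, reflexivity gives a weakly convergent subsequence; when either is $\infty$, I identify the ambient space with the dual of a separable Banach space and invoke sequential weak-$\ast$ compactness. A Cantor diagonal extraction in $j$ then yields a single subsequence $(u_{n_k})$ such that, for every $j$,
\[
\dot\Delta_j u_{n_k}\rightharpoonup v^{(j)}\quad\text{(or}\ \stackrel{\ast}{\rightharpoonup})\quad\text{in } L^q(I;L^p(\Rr^d)),
\]
and $v^{(j)}$ has Fourier support in $2^j\mathcal{C}$. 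I then set $u:=\sum_{j\in\Zz}v^{(j)}$ and check that this series converges in $\mathcal{S}'(\Rr^{d+1})/\cP(\Rr^d)$, using the standard fact that the Littlewood--Paley decomposition converges there together with the spectral localization of the $v^{(j)}$. It follows that $\dot\Delta_j u=v^{(j)}$, and $u_{n_k}\to u$ in $\mathcal{S}'/\cP$.

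The lower semicontinuity of the $L^q(I;L^p)$ norm under weak and weak-$\ast$ convergence yields
\[
\|\dot\Delta_j u\|_{L^q(I;L^p)}\le\liminf_{k\to\infty}\|\dot\Delta_j u_{n_k}\|_{L^q(I;L^p)}\qquad\forall j\in\Zz.
\]
Multiplying by $2^{sj}$ and applying Fatou's lemma for the counting measure on $\Zz$ (with the obvious modification when $r=\infty$) gives the Fatou inequality
\[
\|u\|_{\wt L^q(I;\dot B^s_{p,r})}\le\liminf_{k\to\infty}\|u_{n_k}\|_{\wt L^q(I;\dot B^s_{p,r})},
\]
as desired. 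Completeness is then a routine consequence: any Cauchy sequence $(u_n)$ is bounded, so the step above produces a limit candidate $u$; since at each fixed frequency $j$ the sequence $\dot\Delta_j u_n$ is Cauchy in $L^q(I;L^p)$, it converges strongly, and the limit must agree with $\dot\Delta_j u$. Applying the Fatou inequality to the differences $u_n-u_{n_k}$ and letting first $k\to\infty$ then $n\to\infty$ converts the $\ell^r$-Cauchy condition into $\|u_n-u\|_{\wt L^q(I;\dot B^s_{p,r})}\to0$.

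The only real subtlety I expect is the case $p=\infty$ or $q=\infty$, where one must work with weak-$\ast$ rather than weak convergence and verify that the assembled distribution $u=\sum_j v^{(j)}$ genuinely defines an element of $\mathcal{S}'/\cP$ whose Littlewood--Paley blocks coincide with the $v^{(j)}$. This is handled exactly as in \cite[Theorem 2.25]{BCD} by testing against Schwartz functions whose Fourier transforms are supported away from the origin.
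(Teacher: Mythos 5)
Your proposal follows exactly the route the paper points to — it cites Theorem~2.25 of \cite{BCD} and says the Chemin--Lerner version "can be proved analogously," without writing out the details, and you supply precisely that adaptation: extract, for each $j$, a weakly (or weak-$\ast$) convergent subsequence of the blocks $\dot\Delta_j u_n$ from their bound in $L^q(I;L^p)$, use spectral localization to assemble $u=\sum_j v^{(j)}$ in $\mathcal{S}'/\mathcal{P}$, then apply norm lower semicontinuity in $j$-fixed blocks and Fatou's lemma on $\ell^r(\Zz)$, and finally deduce completeness. This is the standard argument and matches the paper's intent.

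One imprecision worth fixing: you claim ``when $p,q\in[1,\infty)$, reflexivity gives a weakly convergent subsequence,'' but $L^q(I;L^p)$ is not reflexive when $p=1$ or $q=1$ (and for $p=\infty$, $q<\infty$ the space $L^q(I;L^\infty)$ is not itself a dual space either, only $L^q_{w*}(I;L^\infty)$ is). The usual repair, which is also how the stationary case in \cite[Theorem 2.25]{BCD} is carried out, is to exploit the spectral localization more aggressively. Since $\widehat{\dot\Delta_j u_n}(\cdot,t)$ is supported in the fixed annulus $2^j\mathcal{C}$, the blocks and all their $x$-derivatives are controlled by Bernstein, so the sequence is bounded in $L^1_{\mathrm{loc}}$ and one can extract a subsequence converging in $\mathcal{D}'(\Rr^d\times \mathrm{int}\,I)$ (the Montel property of $\mathcal{D}'$ does this for free). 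For the norm lower semicontinuity when $p=1$ or $q=1$, pass through the embedding of $L^1$ into finite Radon measures $\mathcal{M}$ (dual of $C_0$) to get weak-$\ast$ lower semicontinuity, and then observe that the limit $v^{(j)}$, having Fourier support in $2^j\mathcal{C}$, satisfies $v^{(j)}=\widetilde{\dot\Delta}_j v^{(j)}$ for a slightly fatter multiplier; since this is a convolution with a Schwartz function, each time slice of $v^{(j)}$ is a genuine $L^1$ (indeed smooth) function, so $v^{(j)}\in L^q(I;L^p)$ and the $\mathcal{M}$-norm coincides with the $L^1$-norm. With this correction the remainder of your argument (Fatou in $\ell^r$, then completeness from the Fatou property by applying it to $u_n-u_{n_k}$) goes through as written.
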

By Minkowski's inequality, we have 
\bq\label{compareCL}
\| u\|_{\wt L^q(I; \dot B^s_{p, r})}\le \| u\|_{L^q(I; \dot B^s_{p, r})}~~\text{if}~~ q\le r,\quad \| u\|_{\wt L^q(I; \dot B^s_{p, r})}\ge \| u\|_{L^q(I; \dot B^s_{p, r})}~~\text{if}~~ q\ge r.
\eq
We shall need to following lemma to prove the continuity in time of solutions.
\begin{lemm}\label{lemm:trace} Let $p\in [1, \infty]$ and $s,~s'\in \Rr$ with $s\ge s'$. If $u\in \wt L^\infty([a, b]; \dot B^s_{p, 1}(\Rr^d))$ and $\p_zu\in  \wt L^1([a, b]; \dot B^{s'}_{p, 1}(\Rr^d))$, then $u\in C([a, b]; \dot B^s_{p, 1}(\Rr^d))$.
\end{lemm}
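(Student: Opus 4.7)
The plan is to reduce the continuity of $u$ in the $\dot B^s_{p,1}$ topology to (a) blockwise continuity of each dyadic piece $\dot\Delta_j u$ in $L^p$, and (b) a summable Besov tail inherited from the $\wt L^\infty$ hypothesis. These two ingredients will then be combined through an $\varepsilon/2$-splitting over the frequency index $j \in \Zz$.

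First I would fix an index $j \in \Zz$ and use that $\dot\Delta_j$ commutes with $\p_z$. The assumption $u \in \wt L^\infty([a,b]; \dot B^s_{p,1})$ yields $\dot\Delta_j u \in L^\infty([a,b]; L^p)$, while $\p_z u \in \wt L^1([a,b]; \dot B^{s'}_{p,1})$ yields $\p_z \dot\Delta_j u \in L^1([a,b]; L^p)$. Standard vector-valued absolutely continuous function theory then gives $\dot\Delta_j u \in C([a,b]; L^p)$ together with the quantitative bound
$$\| \dot\Delta_j u(t) - \dot\Delta_j u(t_0)\|_{L^p} \le \left| \int_{t_0}^t \| \p_\tau \dot\Delta_j u(\tau)\|_{L^p}\, d\tau\right|,$$
and the right-hand side tends to $0$ as $t \to t_0$ by absolute continuity of the Lebesgue integral.

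Next I would control the high-frequency tail uniformly in time. Since
$$\sum_{j \in \Zz} 2^{sj} \| \dot\Delta_j u\|_{L^\infty([a,b]; L^p)} = \| u\|_{\wt L^\infty([a,b]; \dot B^s_{p,1})} < \infty,$$
for any $\varepsilon > 0$ there is some $N$ with $2 \sum_{|j| > N} 2^{sj} \| \dot\Delta_j u\|_{L^\infty([a,b]; L^p)} < \varepsilon/2$, and the triangle inequality gives
$$\sum_{|j| > N} 2^{sj} \| \dot\Delta_j u(t) - \dot\Delta_j u(t_0)\|_{L^p} < \varepsilon/2$$
uniformly in $t, t_0 \in [a,b]$. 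The residual sum $\sum_{|j|\le N}$ contains finitely many terms, each tending to $0$ as $t \to t_0$ by the blockwise step, and so is $< \varepsilon/2$ once $t$ is sufficiently close to $t_0$. Combining the two estimates yields $\| u(t) - u(t_0)\|_{\dot B^s_{p,1}} < \varepsilon$, which is the desired continuity.

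I do not foresee a substantive obstacle. The only conceptual nuance is that the condition $s \ge s'$ plays no quantitative role in the argument: one should resist bounding $2^{sj}\| \dot\Delta_j u(t) - \dot\Delta_j u(t_0)\|_{L^p}$ by the $\dot B^{s'}_{p,1}$ bound on $\p_z u$, since the resulting factor $2^{(s-s')j}$ would ruin summability at high frequencies. The correct mechanism is to separate high-frequency summability (from the $\wt L^\infty \dot B^s_{p,1}$ bound on $u$) from low-frequency pointwise convergence in $L^p$ (from the time-derivative bound), as above — a standard template for establishing time continuity in $\dot B^s_{p,1}$-type spaces.
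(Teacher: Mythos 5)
Your argument is correct, and it takes a genuinely different route from the paper's. The paper works with the low-frequency cutoffs $\dot S_j u = \sum_{j'\le j-1}\dot\Delta_{j'}u$, establishes that each $\dot S_j u$ lies in $C([a,b];\dot B^s_{p,1})$, and then shows $\dot S_j u \to u$ uniformly in $\wt L^\infty([a,b];\dot B^s_{p,1})$. That first step is exactly where the hypothesis $s\ge s'$ is used: since $\dot S_j$ has Fourier support in $\{|\xi|\lesssim 2^j\}$, the bound $\p_z u\in \wt L^1\dot B^{s'}_{p,1}$ can be \emph{upgraded} to $\dot S_j\p_z u\in \wt L^1\dot B^{s}_{p,1}=L^1\dot B^s_{p,1}$ at the cost of a factor $2^{(s-s')j}$, which is finite only because $s-s'\ge 0$ controls the sum over low frequencies $k\le j$. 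You instead decompose into individual blocks $\dot\Delta_j$, prove blockwise time-continuity in $L^p$ via vector-valued absolute continuity (which, for a single fixed $j$, works for any $s'$), and glue via an $\varepsilon/2$-splitting using the $\ell^1$ tail from the $\wt L^\infty\dot B^s_{p,1}$ hypothesis. Your observation that $s\ge s'$ then plays no quantitative role is accurate, and indeed your version establishes a marginally more general statement than the one stated; the paper's phrasing with $\dot S_j$ is what makes the monotonicity condition necessary there. Both routes are standard and the amount of work is comparable, so the choice is essentially a matter of taste.
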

\begin{proof}
 We first note that if $v\in \wt L^q([a, b];  \dot B^{s_1}_{p, r})$ then $\dot S_jv\in \wt L^q([a, b];  \dot B^{s_2}_{p, r})$ for all $s_2\ge s_1$ and all $j\in \Zz$. Applying this we obtain 
\[
\dot S_ju\in \wt L^\infty([a, b];  \dot B^s_{p, 1}),\quad \p_z\dot S_ju=\dot S_j \p_zu \in  \wt L^1([a, b];  \dot B^{s}_{p, 1})=  L^1([a, b];  \dot B^{s}_{p, 1}),
\]
whence $\dot S_j u\in C([a, b];   \dot B^{s}_{p, 1})$.  It then suffices to prove that the sequence of $\dot B^s_{p, 1}$-valued functions $\dot S_j u$ converges to $u$ uniformly on $[0, T]$ as $j\to \infty$. Indeed, since $u-\dot S_ju=\sum_{k\ge j}\dot \Delta_j u$ we have 
\[
\| u-\dot S_ju\|_{L^\infty([a, b]; \dot B^s_{p, 1})}\le \sum_{k\ge j}\sum_{\ell\in \Zz}2^{sk}\|\dot\Delta_\ell \dot\Delta_k u\|_{L^\infty([a, b]; L^p)} \le C\sum_{k\ge j-3}2^{sk}\| \dot\Delta_k u\|_{L^\infty([a, b]; L^p)}
\]
which converges to $0$ as $j\to \infty$ because it is the tail of the  series convergent to $\| u\|_{\wt L^\infty([a, b]; \dot B^s_{p, 1})}$. 
\end{proof}
The following  product rules and nonlinear estimates shall be used frequently.
\begin{prop}[\protect{\cite[Corollary 2.54]{BCD}}]
For $I\subset \Rr$, $(p, r, q)\in [1, \infty]^3$ and $s>0$, we have
\begin{align}\label{pr}
&\Vert u_1 u_2 \Vert_{\dot B^s_{p, r}}\le C \Vert u_1\Vert_{L^\infty}\Vert u_2\Vert_{\dot B^s_{p, r}}+C\Vert u_2\Vert_{L^\infty}\Vert u_1\Vert_{\dot B^s_{p, r}}\\
\label{pr:CL}
&\Vert u_1 u_2 \Vert_{\wt L^q(I; \dot B^s_{p, r})}\le C \Vert u_1\Vert_{L^\infty(I; L^\infty)}\Vert u_2\Vert_{\wt L^q(I; \dot B^s_{p, r})}+C\Vert u_2\Vert_{L^\infty(I; L^\infty)}\Vert u_1\Vert_{\wt L^q(I; \dot B^s_{p, r})}.
\end{align}
\end{prop}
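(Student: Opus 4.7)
The plan is to apply Bony's paraproduct decomposition
\[
u_1 u_2 = T_{u_1} u_2 + T_{u_2} u_1 + R(u_1, u_2),
\]
where $T_u v := \sum_j \dot S_{j-1} u\, \dot \Delta_j v$ is the paraproduct and $R(u_1,u_2) := \sum_{|j-k|\le 1}\dot\Delta_j u_1\, \dot\Delta_k u_2$ is the remainder, and then to estimate each of the three pieces in $\dot B^s_{p,r}$. Throughout I work in $\mathcal{S}'/\cP$ as in the definition of the homogeneous Besov spaces, which is what makes the pointwise bound on $\dot S_{j-1} u$ by $\|u\|_{L^\infty}$ meaningful.

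For the paraproduct $T_{u_1} u_2$, Fourier-support considerations show that $\dot\Delta_j (\dot S_{j'-1} u_1\, \dot\Delta_{j'} u_2)$ vanishes unless $|j-j'|\le N$ for some fixed $N$. Using the $L^p\!\to\! L^p$ boundedness of the Littlewood-Paley projectors and the pointwise bound $\|\dot S_{j'-1} u_1\|_{L^\infty}\le C\|u_1\|_{L^\infty}$ (since $\dot S_{j'-1}$ is convolution by an $L^1$ kernel of fixed mass), I would obtain
\[
2^{sj}\|\dot\Delta_j T_{u_1}u_2\|_{L^p} \le C\|u_1\|_{L^\infty}\sum_{|j'-j|\le N}2^{s(j-j')}\,2^{sj'}\|\dot\Delta_{j'}u_2\|_{L^p},
\]
and taking $\ell^r$ in $j$ yields $\|T_{u_1}u_2\|_{\dot B^s_{p,r}}\le C\|u_1\|_{L^\infty}\|u_2\|_{\dot B^s_{p,r}}$. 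Swapping the roles controls $T_{u_2}u_1$. This piece works for any $s\in\Rr$.

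The main obstacle is the remainder $R(u_1,u_2)$, since the Fourier supports of the constituent blocks $\dot\Delta_k u_1\, \widetilde{\dot\Delta}_k u_2$ are merely contained in balls of radius $\sim 2^k$ rather than in disjoint annuli. This forces $\dot\Delta_j R(u_1,u_2) = \sum_{k\ge j-N}\dot\Delta_j(\dot\Delta_k u_1\, \widetilde{\dot\Delta}_k u_2)$, so that
\[
2^{sj}\|\dot\Delta_j R(u_1,u_2)\|_{L^p}\le C\|u_1\|_{L^\infty}\sum_{k\ge j-N}2^{(j-k)s}\,2^{sk}\|\widetilde{\dot\Delta}_k u_2\|_{L^p}.
\]
The right-hand side is the convolution on $\Zz$ of $(2^{sk}\|\widetilde{\dot\Delta}_k u_2\|_{L^p})_k\in\ell^r$ with the sequence $a_\ell := 2^{\ell s}\mathbf{1}_{\ell\le N}$, and this latter sequence belongs to $\ell^1(\Zz)$ \emph{precisely because} $s>0$. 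Young's convolution inequality then gives $\|R(u_1,u_2)\|_{\dot B^s_{p,r}}\le C\|u_1\|_{L^\infty}\|u_2\|_{\dot B^s_{p,r}}$, and combining with the two paraproduct bounds yields \eqref{pr}.

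For the Chemin-Lerner estimate \eqref{pr:CL}, I would run exactly the same three estimates after replacing $L^p$ by $L^q(I;L^p)$ and applying Hölder's inequality in the time variable to distribute the time integrability; the key point is that both $\|\dot S_{j'-1}u_1\|_{L^\infty(I;L^\infty)}$ and $\|\dot\Delta_k u_1\|_{L^\infty(I;L^\infty)}$ are bounded by $C\|u_1\|_{L^\infty(I;L^\infty)}$. No new ideas beyond handling the same convolution sum in $\ell^r$ are required, and the Chemin-Lerner norm accommodates the pointwise-in-$j$ bounds because the $\ell^r$-summation lies outside the $L^q(I;L^p)$ norm by definition.
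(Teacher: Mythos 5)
Your proof is correct and follows the same Bony paraproduct decomposition argument used in the cited reference (Bahouri--Chemin--Danchin, Corollary 2.54), which the paper invokes without reproducing the proof. The three-piece estimate, the use of $s>0$ to sum the geometric tail in the remainder, and the observation that the Chemin--Lerner case goes through verbatim because the $\ell^r$-summation sits outside $L^q_t L^p_x$ are all in line with the standard treatment.
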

We note that when $s=0$, \eqref{pr} and \eqref{pr:CL} fail in general. In particular, $\B^0$ is not an algebra. 
\begin{theo}[\protect{\cite[Theorem 2.61]{BCD}}]\label{est:nonl}
  Consider~$F\in C^\infty(\Cc^N)$ such that~$F(0)=0$. Let $s>0$ and $(p, r, q)\in [1, \infty]^3$. 
If  
\[
U\in  \wt L^{q}(I; \dot B^{s}_{p, r}(\Rr^d)^N)\cap L^\infty(I; L^\infty(\Rr^d)^N),
\]
 then $F(U)$ belongs to the same space  and
\begin{equation}\label{est:F(u):CL}
\Vert F(U)\Vert_{\wt L^{q}(I; \dot B^{s}_{p, r})}\le \mathcal{F}\bigl(\Vert U\Vert_{L^\infty(I; L^\infty)}\bigr)\Vert U\Vert_{\wt L^{q}(I; \dot B^{s}_{p, r})}
\end{equation}
 for some nondecreasing function $\cF:\Rr^+\to\Rr^+$ independent of $U$. 
\end{theo}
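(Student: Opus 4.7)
The plan is to adapt the classical Meyer--Bony composition identity to the Chemin--Lerner setting. Since the Chemin--Lerner norm differs from $L^q(I; \dot B^s_{p, r})$ only by swapping the order of the $L^q_t$ and $\ell^r_j$ norms, the main work reduces to a pointwise-in-$t$ Littlewood--Paley bound that tracks the $L^q_t L^p_x$ norm dyadic block by dyadic block.

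First, I would use the telescoping identity, valid because $F(0) = 0$ and $U$ is bounded (so $\dot S_j U \to 0$ in $L^\infty$ as $j \to -\infty$ and $\dot S_j U \to U$ as $j \to +\infty$):
\begin{equation*}
F(U) = \sum_{j \in \Zz} \bigl[ F(\dot S_{j+1} U) - F(\dot S_j U) \bigr] = \sum_{j \in \Zz} m_j \, \dot \Delta_j U,
\end{equation*}
where $m_j(t, x) := \int_0^1 F'\bigl( \dot S_j U + \tau \dot \Delta_j U \bigr)(t, x) \, d\tau$. Since $F$ is smooth and $U \in L^\infty(I; L^\infty)$, the arguments of $F'$ stay in a fixed compact subset of $\Cc^N$, giving $\|m_j\|_{L^\infty(I; L^\infty)} \le \mathcal{F}_0\bigl(\|U\|_{L^\infty(I; L^\infty)}\bigr)$ uniformly in $j$. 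Combining the chain rule with Bernstein's inequality applied to $\dot S_j U$ and $\dot \Delta_j U$ yields moreover $\|\nabla^N m_j\|_{L^\infty(I; L^\infty)} \le 2^{jN} \mathcal{F}_N\bigl(\|U\|_{L^\infty(I; L^\infty)}\bigr)$ for every integer $N \ge 0$.

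Second, I would bound $\|\dot \Delta_k F(U)(t)\|_{L^p}$ by splitting the sum at $j = k - N_0$ for a fixed integer $N_0$. In the tail $j \ge k - N_0$, the crude bound $\|\dot \Delta_k(m_j \dot \Delta_j U)\|_{L^p} \le C \mathcal{F}_0 \|\dot \Delta_j U\|_{L^p}$ combined with $2^{sk} \le 2^{s N_0} 2^{sj}$ suffices. In the body $j < k - N_0$, the spectrum of $\dot \Delta_j U$ lies far below $2^k$, so $\dot \Delta_k(m_j \dot \Delta_j U)$ only picks up the high-frequency part of $m_j$; a Taylor expansion of the kernel of $\dot \Delta_k$ together with the derivative bound on $m_j$ yields a decay factor $2^{-N(k - j)}$ for any chosen $N$, and taking $N > s$ makes the resulting tail summable. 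Combining both regimes gives the block-wise estimate
\begin{equation*}
2^{sk} \|\dot \Delta_k F(U)(t)\|_{L^p} \le \mathcal{F}\bigl(\|U\|_{L^\infty(I; L^\infty)}\bigr) \sum_{j \in \Zz} c_{k-j} \, 2^{sj} \|\dot \Delta_j U(t)\|_{L^p},
\end{equation*}
with $(c_\ell)_{\ell\in \Zz} \in \ell^1(\Zz)$ a fixed sequence independent of $U$.

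Finally, taking the $L^q_t$ norm of $\|\dot \Delta_j U(t)\|_{L^p}$ inside the sum and then the $\ell^r_k$ norm, Young's convolution inequality $\ell^1 \ast \ell^r \hookrightarrow \ell^r$ yields \eqref{est:F(u):CL}. The main obstacle is the low-frequency regime $j \ll k$, where one must exploit the smoothness of $F$ to transfer the spectral cost from $\dot \Delta_j U$ onto the multiplier $m_j$; the hypothesis $s > 0$ is essential here, as the analogous composition estimate fails at $s = 0$, consistent with the observation that $\dot B^0_{\infty, 1}$ is not an algebra. The upgrade from the $L^q_t$-based Besov bound to the Chemin--Lerner bound is automatic once the block-wise estimate above has been obtained pointwise in $t$.
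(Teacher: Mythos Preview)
The paper does not prove this statement; it is quoted directly from \cite[Theorem~2.61]{BCD} without argument. Your approach via Meyer's first linearization---the telescoping identity $F(U)=\sum_j m_j\,\dot\Delta_j U$, Fa\`a di Bruno plus Bernstein to bound $\|\nabla^N m_j\|_{L^\infty}\lesssim 2^{jN}$, and then the block-wise decay $\|\dot\Delta_k(m_j\dot\Delta_j U)\|_{L^p}\lesssim 2^{-N(k-j)}\|\dot\Delta_j U\|_{L^p}$ for $j<k$---is exactly the method used in \cite{BCD}, and is correct; the passage to the Chemin--Lerner norm by taking $L^q_t$ before $\ell^r_j$ is indeed automatic once the pointwise-in-$t$ convolution estimate is in hand. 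One small point worth tightening: in the homogeneous setting the claim $\dot S_j U\to 0$ in $L^\infty$ as $j\to -\infty$ is not true for arbitrary $U\in L^\infty$, but since $s>0$ the partial sums of the telescoping series converge in $\dot B^s_{p,r}$ modulo polynomials, which suffices.
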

\subsection{Fractional heat kernel in homogeneous Besov spaces}
\begin{lemm}[\protect{\cite[Lemma 2.4]{BCD}}]\label{lemm:fracheat}
There exist positive constants $c$ and $C$, both depending only on $d$, such that for all $p\in [1, \infty]$, $t>0$ and $k\in \Zz$ we have
\[
\| e^{-t|D_x|}\dot\Delta_ku\|_{L^p(\Rr^d)}\le Ce^{-ct2^k}\| \dot\Delta_k u\|_{L^p(\Rr^d)}.
\]
\end{lemm}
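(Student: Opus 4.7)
The plan is to prove this by reducing everything to the case $k=0$ via a rescaling argument, estimating the $L^1$-norm of the convolution kernel of $e^{-t|D_x|}\dot\Delta_0$, and then invoking Young's convolution inequality.

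First I would introduce the convolution kernel $g_{t,k}$ whose Fourier transform is $\widehat{g_{t,k}}(\xi)=\varphi(2^{-k}\xi)e^{-t|\xi|}$, so that $e^{-t|D_x|}\dot\Delta_k u=g_{t,k}\ast u$. A simple change of variables $\xi=2^{k}\eta$ yields the scaling identity
\[
g_{t,k}(x)=2^{dk}\,g_{2^kt,\,0}(2^k x),
\qquad\text{hence}\qquad \|g_{t,k}\|_{L^1(\Rr^d)}=\|g_{2^k t,\,0}\|_{L^1(\Rr^d)}.
\]
By Young's inequality, once we show that $\|g_{s,0}\|_{L^1(\Rr^d)}\le Ce^{-cs}$ for all $s>0$ with constants $c,C$ depending only on $d$, we obtain $\|e^{-t|D_x|}\dot\Delta_k u\|_{L^p}\le\|g_{t,k}\|_{L^1}\|\dot\Delta_k u\|_{L^p}\le Ce^{-c\,2^k t}\|\dot\Delta_k u\|_{L^p}$ uniformly in $p\in[1,\infty]$, which is exactly the claim.

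The heart of the matter is therefore the pointwise/integral bound on $g_{s,0}$. I would exploit that $h_s(\xi):=\varphi(\xi)e^{-s|\xi|}$ is supported in the fixed annulus $\mathcal{C}=\{3/4\le|\xi|\le 8/3\}$ where $e^{-s|\xi|}\le e^{-\frac{3}{4}s}$. A direct induction on $|\alpha|$ shows that on $\supp\varphi$, each factor produced by differentiating $e^{-s|\xi|}$ is a smooth function of $\xi$ times $s$, so
\[
\|\partial^\alpha h_s\|_{L^\infty(\Rr^d)}\le C_\alpha (1+s)^{|\alpha|}e^{-\frac{3}{4}s},
\]
and compactness of the support then gives the same bound (up to a constant) in $L^2$. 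Fix an integer $N>d/2$. By Plancherel,
\[
\bigl\|(1+|x|^2)^{N/2} g_{s,0}\bigr\|_{L^2(\Rr^d)}\le C_N\sum_{|\alpha|\le N}\|\partial^\alpha h_s\|_{L^2(\Rr^d)}\le C_N (1+s)^N e^{-\frac{3}{4}s}.
\]
Cauchy--Schwarz against the $L^2$ weight $(1+|x|^2)^{-N/2}$ (integrable in $L^2$ thanks to $N>d/2$) then gives
\[
\|g_{s,0}\|_{L^1(\Rr^d)}\le C_N (1+s)^N e^{-\frac{3}{4}s}\le Ce^{-cs}
\]
for any $c<3/4$ and a correspondingly chosen $C=C(d)$, absorbing the polynomial factor into the exponential.

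The only mildly delicate point is that the polynomial prefactor $(1+s)^N$ must be swallowed by the exponential $e^{-\frac{3}{4}s}$; this is harmless because we may choose $c$ strictly smaller than $3/4$. No step here is a serious obstacle, the argument being essentially the standard Bernstein-type decay estimate for Fourier multipliers localized to an annulus; I would expect the writeup to fit in a few lines once the scaling reduction and the kernel bound are in place.
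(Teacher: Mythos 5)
Your argument is the standard one underlying the cited reference (BCD, Lemma 2.4, there stated for $e^{t\Delta}$): scale down to $k=0$, bound the $L^1$-norm of the localized kernel via weighted Plancherel plus Cauchy--Schwarz, then apply Young's inequality; the paper itself gives no proof and simply invokes BCD, so your route is the expected one.

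There is one small but real slip in the Young's inequality step. You set $\widehat{g_{t,k}}(\xi)=\varphi(2^{-k}\xi)e^{-t|\xi|}$ and correctly note $e^{-t|D_x|}\dot\Delta_ku=g_{t,k}*u$, but then write $\|e^{-t|D_x|}\dot\Delta_ku\|_{L^p}\le\|g_{t,k}\|_{L^1}\|\dot\Delta_ku\|_{L^p}$. Young's inequality applied to $g_{t,k}*u$ only gives $\|g_{t,k}\|_{L^1}\|u\|_{L^p}$, and the identity $e^{-t|D_x|}\dot\Delta_ku=g_{t,k}*\dot\Delta_ku$ is false (it would introduce $\varphi^2$). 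The fix is standard: pick $\widetilde\varphi\in C_c^\infty(\Rr^d\setminus\{0\})$ supported in a slightly larger annulus with $\widetilde\varphi\equiv 1$ on $\supp\varphi$, set $\widehat{\widetilde g_{t,k}}(\xi)=\widetilde\varphi(2^{-k}\xi)e^{-t|\xi|}$, and observe that since $\widehat{\dot\Delta_ku}$ is supported where $\widetilde\varphi(2^{-k}\cdot)\equiv 1$, one does have $e^{-t|D_x|}\dot\Delta_ku=\widetilde g_{t,k}*\dot\Delta_ku$; Young's then yields the claim with $\|\widetilde g_{t,k}\|_{L^1}$. Your $L^1$ kernel estimate applies verbatim to $\widetilde g_{s,0}$ (all that is used is compact support in an annulus bounded away from $0$), so the rest of the argument goes through unchanged.
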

\begin{prop}\label{prop:heat}
Let $s\in \Rr$, $\nu>0$, and $(p, r, q_1, q_2)\in [1, \infty]^4$  such that $q_2\le q_1$. Let $I=[a, b]$ where $a\in \Rr\cup \{-\infty\}$ and $b\in \Rr$. Then there exist positive constants $C_1$ and $C_2$, both depending only on $d$, such that 
\begin{align}\label{Duhamel1}
&\| e^{-\nu z|D_x|}u(x)\|_{\wt L^{q_1}(I; \dot  B^{s+\frac{1}{q_1}}_{p, r})}\le  \frac{C_1}{\nu^{\frac{1}{q_1}}}\| u\|_{\dot B^s_{p, r}},\\\label{Duhamel2}
&\left\| \int_a^ze^{-\nu(z-z')|D_x|}f(x, z')dz'\right\|_{\wt L^{q_1}(I; \dot  B^{s+\frac{1}{q_1}}_{p, r})}\le  \frac{C_2}{\nu^{1+\frac{1}{q_1}-\frac{1}{q_2}}}\| f\|_{\wt L^{q_2}(I; \dot B^{s-1+\frac{1}{q_2}}_{p, r})}.
\end{align}
\end{prop}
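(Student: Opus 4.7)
The plan is to prove both estimates frequency-block by frequency-block using Lemma~\ref{lemm:fracheat}, then aggregate through the $\ell^r$ summation that defines the Besov norm. Since $\dot\Delta_k$ commutes with the Fourier multiplier $e^{-\nu z|D_x|}$, Lemma~\ref{lemm:fracheat} gives the pointwise-in-$z$ bound
\[
\|\dot\Delta_k(e^{-\nu z|D_x|}u)\|_{L^p}\le Ce^{-c\nu z2^k}\|\dot\Delta_k u\|_{L^p}.
\]
For \eqref{Duhamel1} I would take the $L^{q_1}(I)$ norm of this in $z$, compute the resulting exponential integral explicitly to produce a factor $(\nu 2^k)^{-1/q_1}$ (interpreted as $1$ when $q_1=\infty$), multiply by the weight $2^{k(s+1/q_1)}$ so that the factors of $2^{k/q_1}$ cancel, and then take $\ell^r$ in $k$; this reproduces exactly $\nu^{-1/q_1}\|u\|_{\dot B^s_{p,r}}$.

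For \eqref{Duhamel2} I would apply the same pointwise bound under the Duhamel integral to obtain
\[
\Bigl\|\dot\Delta_k\!\int_a^z e^{-\nu(z-z')|D_x|}f(\cdot,z')\,dz'\Bigr\|_{L^p}\le C\!\int_a^z e^{-c\nu(z-z')2^k}\|\dot\Delta_k f(\cdot,z')\|_{L^p}\,dz'.
\]
Viewing the right-hand side as a convolution in $z$ (after trivial extension by zero), Young's inequality on $\Rr$ with exponents $(q_2,q_0,q_1)$ satisfying $1+\frac{1}{q_1}=\frac{1}{q_2}+\frac{1}{q_0}$ produces a factor $(\nu 2^k)^{-1/q_0}$ multiplying $\|\dot\Delta_k f\|_{L^{q_2}(I;L^p)}$. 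The hypothesis $q_2\le q_1$ is precisely what guarantees $q_0\in[1,\infty]$, so Young's inequality is admissible. Writing $2^{k(s+1/q_1)}=2^{k/q_0}\cdot 2^{k(s-1+1/q_2)}$, the $2^{k/q_0}$ cancels the $2^{-k/q_0}$ produced by the kernel, and the $\ell^r$ summation in $k$ then yields $\nu^{-1/q_0}\|f\|_{\wt L^{q_2}(I;\dot B^{s-1+1/q_2}_{p,r})}$, which is the stated bound since $1/q_0=1+1/q_1-1/q_2$.

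The argument is thus essentially a bookkeeping exercise and I do not anticipate any real obstacle. The only delicacies are verifying that the hypothesis $q_2\le q_1$ matches exactly the admissibility range for Young's inequality, and correctly interpreting the endpoints $q_1=\infty$ or $q_2=1$ as the obvious limits of the formal computations. The design of the Chemin--Lerner norm---with the $L^q$ in $z$ inside the $\ell^r$ in frequency---is precisely what permits applying Young's inequality at each frequency before aggregating, which is why these maximal regularity estimates hold with the same exponent structure as for the scalar half-Laplacian evolution.
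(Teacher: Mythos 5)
Your proposal is correct and follows essentially the same route as the paper: apply Lemma~\ref{lemm:fracheat} block by block, take $L^{q_1}(I)$ in $z$ (directly for the semigroup term, via Young's convolution inequality with $1+1/q_1 = 1/q_2 + 1/q_0$ for the Duhamel integral), absorb the resulting factor of $2^{-k/q_1}$ or $2^{-k/q_0}$ into the Besov weight, and sum in $\ell^r$. Your explicit remark that $q_2\le q_1$ is exactly the admissibility condition for Young's inequality is a small but welcome clarification that the paper leaves implicit.
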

\begin{proof}
We first prove \eqref{Duhamel2}. By virtue of Lemma \ref{lemm:fracheat},
\[
\| \dot\Delta_je^{-\nu z|D_x|}u\|_{L^p_x}\le C_1e^{-c\nu z 2^j}\|\dot \Delta_j u\|_{L^p_x},
\]
where $C_1=C_1(d)$. This implies 
\[
\| \dot\Delta_je^{-\nu z|D_x|}u\|_{L^{q_1}(I; L^p_x)}\le \frac{C_2}{\nu^{\frac{1}{q_1}}}2^{-\frac{j}{q_1}}\|\dot \Delta_j u\|_{L^p_x},
\]
where $I=[a, b]$ and $C_2=C_1(cp)^\frac{-1}{p}\le C_3=C_3(d)$. Consequently,
\[
\| e^{-\nu z|D_x|}u\|_{\wt L^{q_1}(I; \dot B^{s+\frac{1}{q_1}}_{p, r})}\le \frac{C_3}{\nu^{\frac{1}{q_1}}}\| u\|_{\dot B^{s}_{p, r}}
\]
which proves \eqref{Duhamel2}. 

As for \eqref{Duhamel1}, we apply Lemma \ref{lemm:fracheat} and Young's inequality in $z$ to have
\[
\begin{aligned}
\left\|\dot \Delta_j\int_a^ze^{-\nu(z-z')|D_x|}f(x, z')dz'\right\|_{L^{q_1}_z(I; L^p_x)}&=\left\|\|\int_a^ze^{-\nu(z-z')|D_x|}\dot \Delta_jf(\cdot, z')dz'\|_{L^p_x}\right\|_{L^{q_1}_z(I)}\\
&\le C_1\|\int_a^be^{-\nu c(z-y)2^j}\|\dot \Delta_jf(\cdot, z')\|_{L^p_x}dz'\|_{L^{q_1}_z(I)}\\
&\le \frac{C_4}{\nu^{1+\frac{1}{q_1}-\frac{1}{q_2}}}2^{j(-1+\frac{1}{q_2}-\frac{1}{q_1})}\|\Delta_jf\|_{L^{q_2}_z(I; L^p_x)},
\end{aligned}
\]
where $C_4=C_4(d)$. It follows that 
\[
\left\| \int_a^ze^{-(z-y)|D_x|}f(x, y)dy\right\|_{L^{q_1}_z(I; \dot B^{s+\frac{1}{q_1}}_{p, r})}\le  \frac{C_4}{\nu^{1+\frac{1}{q_1}-\frac{1}{q_2}}}\| f\|_{L^{q_2}_z(I; \dot B^{s-1+\frac{1}{q_2}}_{p, r})}
\]
which completes the proof of \eqref{Duhamel1}.
\end{proof}
\section{The Dirichlet-Neumann operator in $\dot B^r_{\infty, 1}$} \label{section:DN}
\subsection{Linearization and boundedness}\label{section:contDN}
We study the Dirichlet-Neumann operators associated to the fluid domains $\Omega^\pm$. The time variable is forgotten throughout. In $\Omega^-$, we consider the elliptic problem
\bq\label{eq:elliptic}
\begin{cases}
\Delta_{x, y} \phi=0\quad\text{in}~\Omega^-,\\
\phi=f\quad\text{on}~\Sigma,\\
\na_{x, y}\phi\to 0\quad\text{as}~y\to -\infty.
\end{cases}
\eq
The Dirichlet-Neuman operator associated to $\Omega^-$ is (formally) defined by
\bq\label{def:DN}
G^-(\eta) f=\sqrt{1+|\nabla \eta|^2}\frac{\p\phi}{\p n},
\eq
where we recall that $n$ is the upward-pointing unit normal to $\Sigma$. Similarly, if $\phi$ solves the elliptic problem \eqref{eq:elliptic} with $\Omega^-$ replaced by $\Omega^+$, then we also define $G^+(\eta)f$ by the right-hand side of \eqref{def:DN}. 
Note that $n$ is inward-pointing for $\Omega^+$, so that $G^+(\eta)$ is a nonpositive operator. We shall only state results for $G^-(\eta)$ since corresponding results for  $G^+(\eta)$ are completely parallel. 

Assume that $\phi$ is a smooth solution of \eqref{eq:elliptic}. We straighten the free boundary  using the change of variables $\Rr^d\times J\ni (x, z)\mapsto (x, \varrho(x, z))\in \Omega^-$, where $J=(-\infty, 0)$ and
\bq\label{def:rho}
\varrho(x, z)=z+H(x, z),\quad H(x, z)=e^{z|D_x|}\eta(x),\quad (x, z)\in \Rr^d\times J.
\eq
Clearly, $\varrho(x, 0)=\eta(x)$ and $\varrho(x,z)\to -\infty$ as $z\to -\infty$. We have $\p_z\varrho(x, z)=1+e^{z|D_x|}|D_x|\eta(x)$ and 
\[
\| e^{z|D_x|}|D_x|\eta\|_{L^\infty(\Rr^d\times J)}
\le C\| |D_x| \eta\|_{\dot B^0_{\infty, 1}}
\]
where we have used Lemma \ref{lemm:fracheat}. This implies

\begin{lemm}\label{lemm:diffeo} There exists a positive constant $c_0=c_0(d)\in (0, 1)$ such that if 
\bq\label{cd:diffeo}
\| |D_x| \eta\|_{\dot B^0_{\infty, 1}}\le c_0\quad\text{then}\quad \p_z\varrho\ge \mez
\eq
and thus the mapping $(x, z)\in \Rr^d\times J\mapsto \Omega^-$ is a Lipschitz diffeomorphism.
 \end{lemm}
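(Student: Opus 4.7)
The statement hinges on a single pointwise lower bound for $\p_z\varrho$, so the plan is to estimate $\p_zH = \p_z e^{z|D_x|}\eta$ in $L^\infty(\Rr^d\times J)$ in terms of $\||D_x|\eta\|_{\dot B^0_{\infty,1}}$ and then choose $c_0$ to absorb the prefactor.

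First I would compute, for $z<0$, the identity $\p_z\varrho(x,z)=1+e^{z|D_x|}|D_x|\eta(x)$, so the task reduces to bounding $\|e^{z|D_x|}|D_x|\eta\|_{L^\infty(\Rr^d\times J)}$. Applying $\dot\Delta_j$ and Lemma~\ref{lemm:fracheat} with $t=-z>0$ and $p=\infty$ gives, for every $j\in\Zz$ and $z\in J$,
\[
\|\dot\Delta_j e^{z|D_x|}|D_x|\eta\|_{L^\infty}\le C e^{cz2^j}\|\dot\Delta_j|D_x|\eta\|_{L^\infty}\le C\|\dot\Delta_j|D_x|\eta\|_{L^\infty}.
\]
Since the Littlewood--Paley series reconstructs $e^{z|D_x|}|D_x|\eta$ (modulo polynomials; the function on the left vanishes at spatial infinity for each fixed $z<0$ by construction, so no polynomial correction is needed), summing over $j\in\Zz$ yields
\[
\|e^{z|D_x|}|D_x|\eta\|_{L^\infty(\Rr^d\times J)}\le C\sum_{j\in\Zz}\|\dot\Delta_j|D_x|\eta\|_{L^\infty}=C\||D_x|\eta\|_{\dot B^0_{\infty,1}},
\]
with $C=C(d)$. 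Choosing $c_0=c_0(d)\in(0,1)$ so that $Cc_0\le\mez$ then gives $\p_z\varrho\ge 1-\mez=\mez$ on $\Rr^d\times J$, which is the first assertion of \eqref{cd:diffeo}.

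Next I would upgrade this to the diffeomorphism statement. Under the same smallness, the horizontal derivative $\na_x\varrho=\na_x H$ is likewise bounded in $L^\infty(\Rr^d\times J)$ by the same Littlewood--Paley summation applied to $\na_x\eta$ (which has the same size as $|D_x|\eta$ in $\dot B^0_{\infty,1}$), so $\varrho\in W^{1,\infty}(\Rr^d\times J)$ and the map $(x,z)\mapsto(x,\varrho(x,z))$ is globally Lipschitz. For each fixed $x$, $z\mapsto \varrho(x,z)$ is strictly increasing with derivative $\ge\mez$, satisfies $\varrho(x,0)=\eta(x)$, and tends to $-\infty$ as $z\to-\infty$ since $H(\cdot,z)$ is bounded uniformly in $z\in J$ by the same $\dot B^0_{\infty,1}$ reasoning applied to $\eta$ itself (equivalently, by integrating the bound on $\p_zH$ from $0$). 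Hence the range fibers cover exactly $(-\infty,\eta(x))$, i.e. $\Omega^-$, and the inverse map is also Lipschitz with constant $\le 2$ coming from $\p_z\varrho\ge\mez$.

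The only subtle point is the $L^\infty$ reconstruction step, since $\dot B^0_{\infty,1}$ is defined modulo polynomials; I would justify it by noting that each $z<0$ regularizes the Fourier multiplier $e^{z|D_x|}$ and that the low-frequency tail $\sum_{j\le -N}\dot\Delta_j$ converges absolutely in $L^\infty$ under the $\ell^1(\Zz)$ Besov summability, so no polynomial ambiguity survives. Everything else is routine; there is no genuine obstacle beyond keeping track of constants so that $c_0$ depends only on $d$.
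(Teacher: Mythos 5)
Your argument is correct and is essentially the same one the paper uses: compute $\p_z\varrho=1+e^{z|D_x|}|D_x|\eta$, apply Lemma~\ref{lemm:fracheat} with $p=\infty$ and $t=-z\ge 0$ to bound $\|e^{z|D_x|}|D_x|\eta\|_{L^\infty(\Rr^d\times J)}\le C\||D_x|\eta\|_{\dot B^0_{\infty,1}}$, then choose $c_0$ with $Cc_0\le\mez$. The paper treats the subsequent diffeomorphism claim as immediate, so your extra details (Lipschitz bound on $\na_x\varrho$, surjectivity onto $\Omega^-$) only supplement, not diverge from, the paper's route; the one small imprecision is that uniform boundedness of $H$ in $z$ would need $\eta\in\dot B^0_{\infty,1}$, which is not assumed, but the alternative you mention --- integrating $\p_z\varrho\ge\mez$ to get $\varrho(x,z)\le\eta(x)-|z|/2$ --- already yields $\varrho(x,z)\to-\infty$.
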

  A direct calculation shows that if $g:\Omega^-\to \Rr$ then $\wt g(x, z)=g(x, \varrho(x, z))$  satisfies 
\bq\label{div:eq}
\cnx_{x,z}(\mathcal{A}\na_{x,z}\wt g)(x, z)=\p_z\varrho(\Delta_{x, y}g)(x, \varrho(x, z))
\eq
with 
\bq\label{def:matrixA}
\mathcal{A}=
\begin{bmatrix}
\p_z\varrho\text{I} & -\na_x\varrho\\
-(\na_x\varrho)^T& \frac{1+|\na_x\varrho|^2}{\p_z\varrho}
\end{bmatrix}
= \text{I}+
\begin{bmatrix}
(|D_x|H)\text{I}& -\na_xH\\
-(\na_xH)^T & \frac{|\na_xH|^2-|D_x|H}{1+|D_x|H}
\end{bmatrix}.
\eq
Since $\phi$ is harmonic in $\Omega^-$, $v(x, z)=\phi(x,\varrho(x, z))$ satisfies 
\bq\label{diveq:v}
\cnx_{x,z}(\mathcal{A}\na_{x,z}v)(x, z)=0\quad \text{in} ~\Rr^d\times J,
\eq
or equivalently, 
\bq\label{eq:Deltav}
\Delta_{x, z}v=\p_zQ_a[v]+|D_x|Q_b[v]\quad\text{in} ~\Rr^d\times J,
\eq
 where
\bq\label{Qab}
\begin{aligned}
&Q_a[v]=\na_xH\cdot \na_xv-\frac{|\na_xH|^2-|D_x|H}{1+|D_x|H}\p_zv,\\
&Q_b[v]=\mathcal{R}_x\big(\na_xH\p_zv-|D_x|H\na_xv\big),\quad\mathcal{R}_x=|D_x|^{-1}\cnx_x.
\end{aligned}
\eq
Then the Dirichlet-Neumann operator can be expressed in terms of $v$ as
\bq\label{DN:Hv}
\begin{aligned}
G(\eta)f&=\Big(\frac{1+|\na_x\varrho|^2}{\p_z\varrho}\p_zv-\na_x\varrho\cdot \na_xv\Big)\vert_{z=0}\\
&=\Big[\Big(1+\frac{|\na_xH|^2-|D_x|H}{1+|D_x|H}\Big)\p_zv-\na_xH\cdot \na_xv\Big]\vert_{z=0}\\
&=(\p_zv-Q_a[v])\vert_{z=0}.
\end{aligned}
\eq
By factorizing $\Delta_{x, z}v=(\p_z+|D_x|)(\p_z-|D_x|)v$ and setting 
\bq\label{def:w}
w=(\p_z-|D_x|)v-Q_a[v],
\eq
 we obtain from \eqref{eq:Deltav} that
\begin{align}\label{eq:w}
(\p_z+|D_x|)w=|D_x|(Q_b[v]-Q_a[v]).
\end{align}
In terms of $w$,  \eqref{DN:Hv} becomes
\bq\label{DN:w:0}
G^-(\eta)\eta=|D_x|f+w\vert_{z=0}.
\eq
Since $\na_{x, y}\phi\to 0$ as $y\to -\infty$, so are $\na_{x, z}v$, $Q_a[v]$ and $Q_b[v]$.  Then $v$ and $w$ as solutions of  \eqref{def:w} and \eqref{eq:w} are given by
\begin{align}\label{parabolic:v}
&v(x, z)=e^{z|D_x|}f(x)+\int_0^ze^{(z-z')|D_x|}\{w(x, z')+Q_a[v](x, z')\}dz', \quad z\le 0,\\\label{parabolic:w}
&w(x, r)=\int_{-\infty}^re^{-(r-\tau)|D_x|}|D_x|\{Q_b[v](x, \tau)-Q_a[v](x, \tau)\}d\tau,\quad r\le 0.
\end{align}
 Therefore, $v$ is a fixed point of the operator 
 \bq\label{def:cT}
 \begin{aligned}
 \mathcal{T}[v](x, z)&=e^{z|D_x|}f(x)+\int_0^ze^{(z-z')|D_x|}Q_a[v](x, z')dz'\\
 &+\int_0^ze^{(z-z')|D_x|}\int_{-\infty}^{z'}e^{-(z'-\tau)|D_x|}|D_x|\{Q_b[v](x, \tau)-Q_a[v](x, \tau)\}d\tau dz'.
 \end{aligned}
 \eq
 Note that $\cT$ depends on $\eta$. We shall prove that for small $\eta$, $\cT$ has a unique fixed point in suitable Chemin-Lerner spaces. Thus it is convenient to define
\begin{defi}
Let $\eta\in \dot B^1_{\infty, 1}$ satisfy $\| |D_x| \eta\|_{\dot B^0_{\infty, 1}}<c_0$. If $v$ is the fixed point of $\cT$, then the Dirichlet-Neumann operator $G(\eta)f$ is defined by
\bq\label{def:R}
G(\eta)f=|D_x|f+R^-(\eta)f:=|D_x|f+\int_{-\infty}^0e^{\tau |D_x|}|D_x|\{Q_b[v](x, \tau)-Q_a[v](x, \tau)\}d\tau.
\eq
\end{defi}
In order to obtain the existence of a unique fixed point of $\cT$, we shall appeal to the following elemetary lemma.\begin{lemm}\label{lemm:fp}
Let $E_1$ and $E_2$ be two norm spaces such that $E_1$ is complete. Assume that $E_2$ has the Fatou property: if $(u_n)$ is a bounded sequence in $E_2$ then there exist $u\in E_2$ and a subsequence $(u_{n_k})$ such that $u_{n_k}\to u$ in a sense weaker than norm convergence in $E_1$ and that 
\[
\| u\|_{E_2}\le C\liminf_{k\to \infty} \| u_{n_k}\|_{ E_2}.
\]
 Assume that $K:E_1\to E_1$ is a linear such that $K: E_1\cap E_2\to E_2$ and the following property holds.  There exist $(\alpha_1, \alpha_2)\in (0, 1)^2$ and $A>0$ such that for all $u\in E_1\cap E_2$, 
\begin{align}\label{boundE1}
&\| K(u)\|_{E_1}\le \alpha_1 \| u\|_{E_1},\\\label{boundE2}
&\|K(u)\|_{E_2}\le \alpha_2\| u\|_{E_2}+A \| u\|_{E_1}.
\end{align}
Then for any $u_0\in E_1\cap E_2$ there exists a unique fixed point $u_*\in E_1\cap E_2$ of the mapping 
$u\mapsto u_0+K(u)$.
\end{lemm}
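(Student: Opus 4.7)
The plan is to first construct the fixed point in the complete space $E_1$ via Banach's contraction principle, and then upgrade it to lie in $E_2$ by exploiting the Fatou property. Linearity of $K$ makes the affine mapping $\Phi(u) := u_0 + K(u)$ a strict contraction on $E_1$, since $\|\Phi(u)-\Phi(v)\|_{E_1} = \|K(u-v)\|_{E_1} \le \alpha_1\|u-v\|_{E_1}$ by \eqref{boundE1}. Completeness of $E_1$ and $\alpha_1<1$ then produce a unique $u_*\in E_1$ with $u_* = u_0 + K(u_*)$, and the Picard iterates $u_{n+1} := u_0 + K(u_n)$, started at $u_0\in E_1\cap E_2$, converge to $u_*$ in the norm of $E_1$. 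In particular $M := \sup_n \|u_n\|_{E_1}$ is finite.

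Next I would propagate the $E_2$-bound along the iteration. Applying \eqref{boundE2} to $K(u_n)$ and using the triangle inequality gives
\begin{equation*}
\|u_{n+1}\|_{E_2} \le \|u_0\|_{E_2} + \alpha_2\|u_n\|_{E_2} + A\|u_n\|_{E_1} \le (\|u_0\|_{E_2} + AM) + \alpha_2\|u_n\|_{E_2},
\end{equation*}
and iterating this affine recurrence with ratio $\alpha_2<1$ yields a uniform bound $\sup_n \|u_n\|_{E_2} < \infty$. Invoking the Fatou property of $E_2$ on this bounded sequence extracts a subsequence $(u_{n_k})$ converging, in a sense weaker than $E_1$-norm convergence, to some $\tilde u \in E_2$ satisfying $\|\tilde u\|_{E_2} \le C\liminf_k \|u_{n_k}\|_{E_2}$. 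Since $(u_{n_k})$ already converges to $u_*$ in $E_1$-norm and hence also in the weaker topology, uniqueness of limits in that weaker sense forces $\tilde u = u_*$, so $u_*\in E_1\cap E_2$. Uniqueness of the fixed point within $E_1\cap E_2\subset E_1$ is inherited from its uniqueness in $E_1$.

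The only delicate point is the passage from the $E_1$-fixed point back into $E_2$: the space $E_2$ is not assumed complete (in the intended application to Chemin--Lerner spaces, one cannot rely on reflexivity or weak compactness in the traditional sense), a Cauchy argument in $E_2$ is unavailable, and the iterates $(u_n)$ need not be Cauchy in $E_2$ at all. The Fatou property supplies exactly the right substitute -- a weak-type compactness paired with lower semicontinuity of the $E_2$-norm -- which is just enough to identify the $E_1$-limit $u_*$ as an element of $E_2$ with the quantitative bound transferred from the iterates.
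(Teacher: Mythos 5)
Your proof is correct and follows essentially the same route as the paper: Banach contraction in $E_1$ gives the fixed point as the $E_1$-limit of the Picard iterates, \eqref{boundE1} and \eqref{boundE2} give uniform bounds on the iterates in $E_1$ and $E_2$ respectively, and the Fatou property then identifies the $E_1$-limit as an element of $E_2$. Your affine-recurrence argument for the uniform $E_2$-bound is a minor stylistic variant of the paper's explicit induction with constants $\beta_1,\beta_2,M,R$, and your remark on uniqueness of limits in the weak topology simply spells out what the paper leaves implicit.
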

\begin{proof}
By the Banach contraction  principle,  \eqref{boundE1} implies that $u_0+K(u)$ has a unique fixed point $u_*\in E_1$. Moreover, $u_*=\lim_{n\to \infty} u_n$ in $E_1$, where $u_{n+1}=u_0+K(u_n)$. For  $\beta_1>(1-\alpha_1)^{-1}$, using \eqref{boundE1} and induction  we find that $\|u_n\|_{E_1}< \beta_1 \|u_0\|_{E_1}$ for all $n$. Next, for
\[
\beta_2>\frac{\alpha_2}{1-\alpha_2},\quad M>\frac{A\beta_1}{1-\alpha_2},\quad R=\beta_2\| u_0\|_{E_2}+M\|u_0\|_{E_1},
\]
we claim that $\| u_n-u_0\|_{E_2}<R$ for all $n$. Indeed, this is obvious when $n=0$ and if it is true for $u_n$ then by \eqref{boundE2},
\[
\begin{aligned}
\| u_{n+1}-u_0\|_{E_2}&\le \alpha_2\| u_n\|_{E_2}+A\| u_n\|_{E_1}\\
&\le \alpha_2(R+\|u_0\|_{E_2})+A\beta_1\| u_0\|_{E_1}\\
&= \alpha_2(1+\beta_2)\| u_0\|_{E_2}+(\alpha_2M+A\beta_1)\| u_0\|_{E_1}\\
&\le\beta_2\| u_0\|_{E_2}+M\|u_0\|_{E_1}= R.
\end{aligned}
\]
Thus $(u_n)$ is bounded in $E_2$, so that the Fatou property imposed on $E_2$  guarantees $u_*\in E_1\cap E_2$. \end{proof}
Given~$I\subset \Rr\cup\{\pm \infty\}$, $\mu\in\Rr$ and $(p, q)\in [1, \infty]^2$, we define the interpolation space
\begin{equation}\label{def:X}
\begin{aligned}
X^\mu(I)&=\wt L^\infty(I;\dot B^\mu_{\infty, 1})\cap \wt L^1(I;\dot B^{\mu+1}_{\infty, 1}).
\end{aligned}
\end{equation}
Recall that the product rules \eqref{pr} and \eqref{pr:CL} fail in $\B^0$. As a substitute for $X^0(I)$ we define
\bq
X_*(I)=L^\infty(J; L^\infty)\cap \wt L^1(I; \B^1).
\eq
Note that 
\bq\label{trick:B0}
\|\cdot \|_{L^\infty(I; L^\infty)}\le \|\cdot\|_{\wt L^\infty(I; \B^0)},\quad \|\cdot \|_{X_*(I)}\le \|\cdot\|_{X^0(I)}.
\eq
\begin{prop}\label{lemm:elliptic}
1)  Let $\eta\in  \dot B^1_{\infty, 1}$ and $f\in \dot B^1_{\infty, 1}$. There exists  $c_1=c_1(d)<c_0<1$  such that if
 \bq\label{smallcd:1}
\| |D|_x\eta\|_{\dot B^0_{\infty, 1}}<c_1,
\eq
then $\cT$ has a unique fixed point $v$ in 
\bq
\mathcal{V}_*=\{v\in \mathcal{D}'(\Rr^d\times J): (|D_x|v, \p_zv)\in X_*(J)\}/\Rr.
\eq 
Moreover, there exists $C=C(d)$ such that 
\bq\label{V*est:v}
\| v\|_{\cV_*}\le C\||D_x| f\|_{\dot B^0_{\infty, 1}}
\eq
and
\bq\label{w0:0}
\| R^-(\eta)f\|_{\dot B^0_{\infty, 1}}\le C\|  |D_x|\eta\|_{\dot B^0_{\infty, 1}}\||D_x| f\|_{\dot B^{0}_{\infty, 1}}.
\eq
2)  Let $r>0$, $\eta\in  \dot B^1_{\infty, 1}\cap \dot B^{1+r}_{\infty, 1}$ and $f\in \dot B^1_{\infty, 1}\cap \dot B^{1+r}_{\infty, 1}$. There exists  $c_*=c_*(d,  r)<c_1$  such that if
 \bq\label{smallcd}
\| |D|_x\eta\|_{\dot B^0_{\infty, 1}}<c_*,
\eq
then the unique fixed point $v\in \cV_*$ of $\cT$ belongs to 
\bq
\mathcal{V}^r=\{v\in \mathcal{D}'(\Rr^d\times J): (|D_x|v, \p_zv)\in   X^r(J)\}/\Rr.
\eq 
Moreover, there exists $C=C(d, r)$ such that 
\bq\label{Xest:v}
\| v\|_{\cV^r}\le C\||D_x| f\|_{\dot B^{r}_{\infty, 1}}+C\|  |D_x|\eta\|_{\dot B^r_{\infty, 1}}\| |D_x|f\|_{\dot B^0_{\infty, 1}}
\eq
and 
\bq\label{w0}
\begin{aligned}
\| R^-(\eta)f\|_{\dot B^r_{\infty, 1}}&\le C\|  |D_x|\eta\|_{\dot B^0_{\infty, 1}}\||D_x| f\|_{\dot B^{r}_{\infty, 1}}+C\|  |D_x|\eta\|_{\dot B^r_{\infty, 1}}\| |D_x|f\|_{\dot B^0_{\infty, 1}}.
\end{aligned}
\eq
\end{prop}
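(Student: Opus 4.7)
The plan is to apply the abstract fixed-point lemma (Lemma \ref{lemm:fp}) to the affine map $v \mapsto u_0 + K[v]$, where $u_0(x, z) := e^{z|D_x|}f(x)$ and $K[v] := \cT[v] - u_0$ collects the two integral terms in \eqref{def:cT}. The crucial observation is that $K$ is \emph{linear} in $v$, because $Q_a[v]$ and $Q_b[v]$ in \eqref{Qab} are linear in $v$ (with coefficients that are nonlinear functions of $\eta$ alone). For part 1 I take $E_1 = E_2 = \cV_*$; for part 2 I take $E_1 = \cV_*$ and $E_2 = \cV^r$. Completeness of $\cV_*$ follows from that of $X_*(J)$, and the Fatou property needed for $\cV^r$ is supplied by Proposition \ref{prop:CLspace}.

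The linear estimates are direct applications of Proposition \ref{prop:heat} to the heat-semigroup extensions: using $\p_z u_0 = |D_x|u_0$ and the embedding $\B^0\hookrightarrow L^\infty$, one obtains $\|u_0\|_{\cV_*}\le C\||D_x|f\|_{\B^0}$ and $\|u_0\|_{\cV^r}\le C\||D_x|f\|_{\B^r}$. Applied to $H(x, z) = e^{z|D_x|}\eta(x)$, the same reasoning yields
\[
\|(|D_x|H, \nabla_x H)\|_{X_*(J)} \le C\|\eta\|_{\B^1}, \qquad \|(|D_x|H, \nabla_x H)\|_{X^r(J)} \le C\|\eta\|_{\B^{1+r}},
\]
so that the smallness hypothesis \eqref{smallcd:1} on $\||D_x|\eta\|_{\B^0}$ transfers to smallness of $|D_x|H$ in $L^\infty(J;L^\infty)$.

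The heart of the argument is the pair of bilinear estimates
\[
\|Q_a[v]\|_{\wt L^1(J;\B^1)}+\|Q_b[v]\|_{\wt L^1(J;\B^1)}\le C\|\eta\|_{\B^1}\|v\|_{\cV_*},
\]
\[
\|Q_a[v]\|_{\wt L^1(J;\B^{r+1})}+\|Q_b[v]\|_{\wt L^1(J;\B^{r+1})}\le C\|\eta\|_{\B^1}\|v\|_{\cV^r}+C\|\eta\|_{\B^{1+r}}\|v\|_{\cV_*},
\]
combining the product rule \eqref{pr:CL} (legitimate at regularity $s\ge 1$), the composition estimate \eqref{est:F(u):CL} for the smooth function $t\mapsto(1+t)^{-1}$ applied to the small quantity $|D_x|H$, and the boundedness of the Riesz transforms $R_i$ on $\B^s_{\infty,1}$ (used to handle $\mathcal{R}_x$ in $Q_b$). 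Substituting these bounds into $K[v]$ via two successive applications of Proposition \ref{prop:heat} to the nested $z$-integrals gives
\[
\|K[v]\|_{\cV_*}\le C\|\eta\|_{\B^1}\|v\|_{\cV_*},\qquad \|K[v]\|_{\cV^r}\le C\|\eta\|_{\B^1}\|v\|_{\cV^r}+C\|\eta\|_{\B^{1+r}}\|v\|_{\cV_*}.
\]
Choosing $c_1$ and $c_*=c_*(d,r)$ small enough to make $C\|\eta\|_{\B^1}\le 1/2$ places $K$ in the contractive regime demanded by Lemma \ref{lemm:fp}, which produces the unique fixed point $v$ satisfying \eqref{V*est:v} and \eqref{Xest:v}. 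The bounds \eqref{w0:0} and \eqref{w0} on $R^-(\eta)f$ then follow from the formula \eqref{def:R} and one further application of Proposition \ref{prop:heat}.

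The main technical obstacle lies in the bilinear estimates above. Since $\nabla_x v = R(|D_x|v)$ passes through a Riesz transform, which is unbounded on $L^\infty$, the factor $\nabla_x v$ is not a priori in $L^\infty(J;L^\infty)$ when $v\in\cV_*$, so \eqref{pr:CL} cannot be applied naively with $\nabla_x v$ as the $L^\infty L^\infty$ factor. I would bypass this by leveraging the bilinear structure of $Q_a,Q_b$ together with identities such as $|D_x|=-\sum_i R_i\p_i$ to ensure that the $L^\infty(J;L^\infty)$ weight always lands on $|D_x|H$, $\nabla_xH$, or $\p_z v$---each of which genuinely lies in $L^\infty(J;L^\infty)$---while $\nabla_x v$ receives only a $\wt L^1(J;\B^s)$ weight. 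This explains why the working space $\cV_*$ is built over $\wt L^1(J;\B^1)$ rather than $\wt L^1(J;\B^0)$, where the product rules \eqref{pr} and \eqref{pr:CL} break down entirely.
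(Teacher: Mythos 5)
Your proposal follows the same architecture as the paper's own proof: split $\cT[v]=e^{z|D_x|}f+K[v]$, observe $K$ is linear in $v$, derive closed estimates on $K[v]$ via the product rule \eqref{pr:CL}, the composition estimate \eqref{est:F(u):CL}, and the parabolic smoothing of Proposition \ref{prop:heat}, and finish with Lemma \ref{lemm:fp} (Banach contraction for part 1, Fatou property for part 2). The obstruction you flag at the end---that $v\in\cV_*$ only controls $|D_x|v$ in $L^\infty(J;L^\infty)$, not $\na_x v=\mathcal{R}_x|D_x|v$, because Riesz transforms are unbounded on $L^\infty$---is a genuine subtlety, but the workaround you outline does not close it. Rerouting the $L^\infty(J;L^\infty)$ weight in the product-rule estimates helps with $\|Q_a\|_{\wt L^1(J;\B^{1+r})}$, where you may choose which factor carries the $L^\infty$ weight. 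But the raw, unsmoothed term $Q_a(x,z)$ appears in
\[
\p_z K[v]=\int_0^ze^{(z-z')|D_x|}|D_x|\{w+Q_a\}\,dz'+w(x,z)+Q_a(x,z),
\]
and since $\cV_*$ demands $\p_z K[v]\in L^\infty(J;L^\infty)$, you need $\|Q_a\|_{L^\infty(J;L^\infty)}$ directly; the term $\na_x H\cdot\na_x v$ inside $Q_a$ then forces $\|\na_x v\|_{L^\infty(J;L^\infty)}$ by plain H\"older, with no product rule to reroute.

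The actual repair must exploit the parabolic gain available for the $|D_x|K[v]$ component but not for $\p_z K[v]$. Indeed $|D_x|K[v]=\int_0^ze^{(z-z')|D_x|}|D_x|\{w+Q_a\}\,dz'$ has integrand in $\wt L^1(J;\B^1)$, so \eqref{Duhamel2} with $q_1=\infty$, $q_2=1$ places $|D_x|K[v]$ in the stronger $\wt L^\infty(J;\B^0)$, whence $\na_x K[v]=\mathcal{R}_x|D_x|K[v]\in\wt L^\infty(J;\B^0)\hookrightarrow L^\infty(J;L^\infty)$ by boundedness of Riesz transforms on $\B^0$. It is only $\p_z K[v]$ that is stuck in the weaker $L^\infty(J;L^\infty)$ on account of the raw $Q_a$, which is precisely why the $s=0$ product rule was an issue. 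A working space that carries $|D_x|v$ in $\wt L^\infty(J;\B^0)\cap\wt L^1(J;\B^1)$ while carrying $\p_z v$ only in $L^\infty(J;L^\infty)\cap\wt L^1(J;\B^1)$ breaks the circularity; your proposal, mirroring the symmetric definition of $\cV_*$, implicitly uses $\na_x v\in L^\infty(J;L^\infty)$ without justification, and the rerouting idea does not patch the pointwise estimate of the raw $Q_a$.
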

\begin{proof}
For notational simplicity we shall write $Q_a=Q_a[v]$ and $Q_b=Q_b[v]$. With $w$ given by \eqref{parabolic:w}, we have 
\bq\label{def:opK}
\cT[v](x, z)=e^{z|D_x|}f(x)+K[u](x, z):=e^{z|D_x|}f(x)+\int_0^ze^{(z-z')|D_x|}\{w(x, z')+Q_a(x, z')\}dz'.
\eq
Throughout this proof, condition \eqref{smallcd:1} is always assumed, so that in many instances a nondecreasing function of $\| |D_x|\eta\|_{\B^0}$ is simply controlled by an absolute constant. 

{\bf 1.} Assume that $\eta,~f\in \B^1$. We shall prove that $\cT$ has a unique fixed point in $\cV_*$. Indeed, the estimate \eqref{Duhamel2} (with $\nu=1$)  yields
\bq\label{Xest:w10}
 \| w\|_{X^0(J)}\le C \| |D_x|(Q_b-Q_a)\|_{\wt L^1(J; \dot B^{0}_{\infty, 1})}\le C \| (Q_a, Q_b)\|_{\wt L^1(J; \dot B^{1}_{\infty, 1})}.
\eq
Upon changing the variables $z\mapsto -z$,    \eqref{Duhamel2} and \eqref{Xest:w10} imply
\bq\label{estdxK:0}
\begin{aligned}
\||D_x|(K[v])\|_{X^{0}(J)}&\le C\|  w\|_{\wt L^1(J; \dot B^{1}_{\infty, 1})}+C\|  Q_a\|_{\wt L^1(J; \dot B^{1}_{\infty, 1})}\le  C \| (Q_a, Q_b)\|_{\wt L^1(J; \dot B^{1}_{\infty, 1})}.
\end{aligned}
\eq
Now, since 
\bq\label{form:dzK}
\p_z K[u](x, z)=\int_0^ze^{(z-z')|D_x|}|D_x|\{w(x, z')+Q_a(x, z')\}dz'+w(x, z)+Q_a(x, z),
\eq
using \eqref{Duhamel2} and \eqref{Xest:w10} we estimate 
\bq\label{estdzK:01}
\|\p_z K[u](x, z)\|_{\wt L^1(J; \B^1)}\le C\| w\|_{\wt L^1(J; \B^1)}+C\| Q_a\|_{\wt L^1(J; \B^1)}\le C \| (Q_a, Q_b)\|_{\wt L^1(J; \dot B^{1}_{\infty, 1})}.
\eq
On the other hand, in view of \eqref{trick:B0}, \eqref{Duhamel2} and \eqref{Xest:w10},  we have
\bq\label{estdzK:00}
\begin{aligned}
\| \p_z K[u]\|_{L^\infty(J; L^\infty)}
&\le C\| w\|_{\wt L^1(J; \B^1)}+C\| Q_a\|_{\wt L^1(J; \B^1)}+\| w\|_{\wt L^\infty(J; \B^0)}+\| Q_a\|_{L^\infty(J; L^\infty)}\\
&\le C \| (Q_a, Q_b)\|_{\wt L^1(J; \dot B^{1}_{\infty, 1})}+\| Q_a\|_{L^\infty(J; L^\infty)}.
\end{aligned}
\eq
A combination of \eqref{estdxK:0}, \eqref{estdzK:01}, \eqref{estdzK:00} and \eqref{trick:B0} leads to
 \bq\label{estK:X*:5}
\|\big(|D_x|K[v], \p_zK[v]\big)\|_{X_*(J)}\le C\|(Q_a, Q_b)\|_{ \wt L^1(J; \dot B^{1}_{\infty, 1})}+\| Q_a\|_{L^\infty(J; L^\infty)}.
\eq
Note that if we choose $\cV^0$ in place of $\cV_*$, then we have to estimate $\p_z K[u]$ in $\wt L^\infty(J; \B^0)$. This in turn requires a bound for the nonlinear term $Q_a$ in $\wt L^\infty(J; \B^0)$. However, the product rule \eqref{pr:CL} does not hold when $s=0$. On the other hand, it is readily seen from the definition of $Q_a$ in \eqref{Qab} that 
\bq\label{estQ:Linfty}
\begin{aligned}
\| Q_a\|_{L^\infty(J; L^\infty)}&\le \| \na_x H\|_{L^\infty(J; L^\infty)}\| \na_xv\|_{L^\infty(J; L^\infty)}+C\|(|D_x|H, \na_xH)\|_{L^\infty(J; L^\infty)}\|\p_zv\|_{L^\infty(J; L^\infty)}\\
&\le C\| |D_x|\eta\|_{\B^0}\| \na_{x, z}v\|_{L^\infty(J; L^\infty)}.
\end{aligned}
\eq 
Consequently, in view of \eqref{estK:X*:5} and \eqref{estQ:Linfty}, we obtain
\bq\label{estK:X*:0}
\|\big(|D_x|(K[v]), \p_z(K[v])\big)\|_{X_*(J)}\le C\|(Q_a, Q_b)\|_{ \wt L^1(J; \dot B^{1}_{\infty, 1})}+C\| |D_x|\eta\|_{\B^0}\| \na_{x, z}v\|_{L^\infty(J; L^\infty)}.
\eq
For later use, we now estimate  $\|(Q_a, Q_b)\|_{ \wt L^1(J; \dot B^{1+r}_{\infty, 1})}$ for $r\ge 0$. Let us treat the most difficult term in $Q_a$
\[
I=\frac{|\na_xH|^2-|D_x|H}{1+|D_x|H}\p_zv
\]
which can be decomposed as
\[
I=|\na_xH|^2\p_zv-|\na_xH|^2F(|D_x|H)\p_zv-F(|D_x|H)\p_zv:=I_1+I_2+I_3,\quad F(z)=z/(1+z).
\]
Again, we only consider the most difficult term $I_2$. Since $\||D_x|H\|_{L^\infty}<c_1<1$, we can modify $F$ to a $C^\infty$ function taking same values on the range of $|D_x|H$. Let $r\ge 0$. Since $1+r>0$, using the tame estimate \eqref{pr:CL} gives
\[
\begin{aligned}
\|I_2\|_{\wt L^1(I; \dot B^{1+r}_{\infty, 1})}&\le C\left\| |\na_xH|^2F(|D_x|H)\right\|_{L^\infty(J; L^\infty)}\| \p_zv\|_{\wt L^1(J; \dot B^{1+r}_{\infty, 1})}\\
&\quad +C\left\| |\na_xH|^2F(|D_x|H)\right\|_{\wt L^1(J; \dot B^{1+r}_{\infty, 1})}\| \p_zv\|_{L^\infty(J; L^\infty)}.
\end{aligned}
\]
Since $|F(z)|\le 1$, we have
\[
\left\| |\na_xH|^2F(|D_x|H)\right\|_{L^\infty(J; L^\infty)}\le \| \na_xH\|^2_{L^\infty(J; L^\infty)}\le \| |D_x|\eta\|^2_{\dot B^0_{\infty, 1}}.
\]
Applying \eqref{pr:CL} again yields
\[
\begin{aligned}
\left\| |\na_xH|^2F(|D_x|H)\right\|_{\wt L^1(J; \dot B^{1+r}_{\infty, 1})}&\le C\| |\na_xH|^2\|_{L^\infty(J; L^\infty)}\left\|F(|D_x|H)\right\|_{\wt L^1(J; \dot B^{1+r}_{\infty, 1})}\\
&\quad +C\| |\na_xH|^2\|_{\wt L^1(J; \dot B^{1+r}_{\infty, 1})}\left\|F(|D_x|H)\right\|_{L^\infty(J; L^\infty)}\\
&\le  C\| \na_x\eta\|_{L^\infty}^2\left\|F(|D_x|H)\right\|_{\wt L^1(J; \dot B^{1+r}_{\infty, 1})}+C\| |\na_xH|^2\|_{\wt L^1(J; \dot B^{1+r}_{\infty, 1})}.
\end{aligned}
\]
To estimate $F(|D_x|H)$ we appeal to the nonlinear estimate \eqref{est:F(u):CL} to have
\[
\left\|F(|D_x|H)\right\|_{\wt L^1(J; \dot B^{1+r}_{\infty, 1})}\le C\||D_x|H\|_{\wt L^1(J; \dot B^{1+r}_{\infty, 1})}\le  C\||D_x|\eta\|_{\dot B^r_{\infty, 1}},\quad C=C(d, r).
\]
 By virtue of  \eqref{pr:CL} and \eqref{Duhamel1},
\[
\| |\na_xH|^2\|_{\wt L^1(J; \dot B^{1+r}_{\infty, 1})}\le C \| \na_x\eta\|_{L^\infty}\| \na_x\eta\|_{\dot B^{r}_{\infty, 1}}.
\]
We have proved that
\[
\left\| |\na_xH|^2F(|D_x|H)\right\|_{\wt L^1(J; \dot B^{1+r}_{\infty, 1})}\le C\||D_x|\eta\|_{ B^r_{\infty, 1}}.
\]
Putting together the above considerations leads to
\[
\| I_2\|_{\wt L^1(J; \dot B^{1+r}_{\infty, 1})}\le C\| |D_x|\eta\|_{\dot B^0_{\infty, 1}}\| \p_zv\|_{\wt L^1(J; \dot B^{1+r}_{\infty, 1})}+C\||D_x|\eta\|_{ B^r_{\infty, 1}}\| \p_zv\|_{L^\infty(J; L^\infty)},\quad C=C(d, r).
\]
Other terms in $Q_a$ and $Q_b$ can be estimated similarly, yielding 
\bq\label{estQ}
\begin{aligned}
\| (Q_a, Q_b)\|_{\wt L^1(J; \dot B^{1+r}_{\infty, 1})}\le  C\| |D_x|\eta\|_{\dot B^0_{\infty, 1}}\| \na_{x, z}v\|_{\wt L^1(J; \dot B^{1+r}_{\infty, 1})}+C\||D_x|\eta\|_{ B^r_{\infty, 1}}\| \na_{x, z}v\|_{L^\infty(J; L^\infty)}
\end{aligned}
\eq
for all $r\ge 0$, where $C=C(d, r)$. 

Applying \eqref{estQ} with $r=0$ then combining it with \eqref{estK:X*:0}, we arrive at a closed estimate for $K[v]$ in $\cV_*$
\bq\label{estK:V*}
\|  K[v]\|_{\cV_*}\le C\| |D_x|\eta\|_{\dot B^0_{\infty, 1}}\| v\|_{\cV_*},\quad C=C(d).
\eq
 We choose  $c_1<c_0$ sufficiently small such that $Cc_1<\frac 14$ and consider $\eta$ satisfying \eqref{smallcd:1}. Then by virtue of \eqref{estK:V*}, the Banach contraction principle yields the existence of a unique fixed point $v\in \cV_*$ of $\cT=e^{z|D_x|}f +K$. Since 
 \[
 \|e^{z|D_x|}f\|_{\cV_*}\le \|e^{z|D_x|}f\|_{\cV^0}\le C\| |D_x|f\|_{\dot B^0_{\infty, 1}},
 \]
  it follows from \eqref{estK:V*} that
\bq\label{V*est:v:1}
\|  v\|_{\cV_*}\le 2C\| |D_x|f\|_{\dot B^0_{\infty, 1}},\quad C=C(d).
\eq
Thus \eqref{V*est:v} holds. Finally, since $\| \cdot\|_{L^1\dot B^r_{\infty, 1}}=\| \cdot\|_{\wt L^1\dot B^r_{\infty, 1}}$, the left-hand side of \eqref{w0:0} is bounded by 
\bq\label{bound:w0}
\begin{aligned}
\| e^{\tau |D_x|}|D_x|(Q_b(x, \tau)-Q_a(x, \tau))\|_{L^1_\tau(J; \dot B^0_{\infty, 1})}
\le  C\| Q_b-Q_a\|_{\wt L^1_\tau(J; \dot B^{1}_{\infty, 1})},
\end{aligned}
\eq
so that  \eqref{w0:0} follows from \eqref{estQ} (with $r=0$) and \eqref{V*est:v}.

{\bf 2.} Assume that $\eta,~f\in \B^1\cap \B^{1+r}$ with $r>0$. As in  \eqref{Xest:w10} and \eqref{estdxK:0}, upon using the formula \eqref{form:dzK} for $\p_zK[v]$ we have
\bq\label{Xest:w1}
 \| w\|_{X^r(J)}\le  C \| (Q_a, Q_b)\|_{\wt L^1(J; \dot B^{1+r}_{\infty, 1})}
\eq
and
\bq\label{estdxK}
\begin{aligned}
\|(|D_x|K[v], \p_zK[v])\|_{X^r(J)}&\le C\|  w\|_{\wt L^1(J; \dot B^{1+r}_{\infty, 1})}+\|  Q_a\|_{\wt L^\infty(J; \dot B^{r}_{\infty, 1})}\\
&\le  C \| (Q_a, Q_b)\|_{\wt L^1(J; \dot B^{1+r}_{\infty, 1})}+\|  Q_a\|_{\wt L^\infty(J; \dot B^{r}_{\infty, 1})}.
\end{aligned}
\eq
 Unlike part 1), since $r>0$ the nonlinear term $Q_a$ can be handled using the product rule \eqref{pr:CL} and the nonlinear estimate \eqref{est:F(u):CL} as in the  proof of \eqref{estQ}. More precisely, 
 \bq\label{estQ:2}
 \begin{aligned}
\|  Q_a\|_{\wt L^\infty(J; \dot B^{r}_{\infty, 1})}\le C\| |D_x|\eta\|_{\dot B^0_{\infty, 1}}\| \na_{x, z}v\|_{\wt L^\infty(J; \dot B^{r}_{\infty, 1})}+C\||D_x|\eta\|_{ B^r_{\infty, 1}}\| \na_{x, z}v\|_{L^\infty(J; L^\infty)}.
 \end{aligned}
 \eq
In conjunction with \eqref{estQ} and \eqref{estQ:2}, \eqref{estdxK} implies that $K:\cV_*\cap \cV^r\to \cV^r$ and 
\bq\label{estv:Vr}
\|  K[v]\|_{\mathcal{V}^r}\le C\| |D_x|\eta\|_{\dot B^0_{\infty, 1}}\| v\|_{\mathcal{V}^r}+C\| |D_x|\eta\|_{\dot B^r_{\infty, 1}}\| v\|_{\cV_*},\quad C=C(d, r)
\eq
for all $v\in \cV_*\cap \cV^r$.  We choose  $c_*=c_*(d, r)<c_1$ sufficiently small so that $Cc_*<\frac 14$ and consider $\eta$ satisfying \eqref{smallcd}. Then in view of \eqref{estK:V*} and \eqref{estv:Vr}, $K$ satisfies the conditions \eqref{boundE1} and \eqref{boundE2} of Lemma \ref{lemm:fp} with $E_1=\cV_*$ and $E_2=\cV^r$.  Therefore, the unique fixed point $v\in \cV_*$ of $\cT$ belongs to $\cV^r$. Inserting \eqref{V*est:v:1} into \eqref{estv:Vr} we find
\[
\|  v\|_{\mathcal{V}^r}\le C| |D_x|f\|_{\dot B^{r}_{\infty, 1}}+C\|  |D_x|\eta\|_{\dot B^r_{\infty, 1}}\| |D_x|f\|_{\dot B^0_{\infty, 1}}, \quad C=C(d, r),
\]
whence \eqref{Xest:v} follows. We have used the fact that $\| e^{z|D_x|}f\|_{\cV^r}\le C(d)\| |D_x|f\|_{\B^r}$ which in turn follows from \eqref{Duhamel1}. Finally,  \eqref{w0} can be obtained as in \eqref{w0:0}.
 \end{proof}
Recalling the linearization $G^-(\eta)f=|D_x|f+R^-(\eta)f$, we deduce the following from \eqref{w0:0} and \eqref{w0}.
\begin{coro}\label{paraDN:infd}
Let $r\ge 0$, $\eta\in  \dot B^1_{\infty, 1}\cap \dot B^{1+r}_{\infty, 1}$ and $f\in \dot B^1_{\infty, 1}\cap \dot B^{1+r}_{\infty, 1}$. Assume that $\eta$ satisfies \eqref{smallcd}. Then  there exists $C=C(d, r)>0$ such that
\bq\label{est:G}
\|G^-(\eta)f\|_{\dot B^r_{\infty, 1}}\le  C\| |D_x|f\|_{\dot B^{r}_{\infty, 1}}+ C\|  |D_x|\eta\|_{\dot B^r_{\infty, 1}}\| |D_x|f\|_{\dot B^0_{\infty, 1}}. 
\eq 
\end{coro}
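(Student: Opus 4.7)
The plan is to deduce the corollary directly from the linearization formula $G^-(\eta)f = |D_x|f + R^-(\eta)f$ (see \eqref{def:R}) together with the bounds on $R^-(\eta)f$ already produced in Proposition \ref{lemm:elliptic}. Since the proposition has done all the hard work of setting up the fixed point $v$ of $\cT$ and estimating the remainder $R^-(\eta)f$, the corollary is essentially a bookkeeping step.

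The first step is to note that by the triangle inequality in $\dot B^r_{\infty,1}$,
\[
\| G^-(\eta)f\|_{\dot B^r_{\infty,1}} \le \| |D_x|f\|_{\dot B^r_{\infty,1}} + \| R^-(\eta)f\|_{\dot B^r_{\infty,1}}.
\]
Next, I would split into two cases according to whether $r=0$ or $r>0$, in order to invoke the correct bound on $R^-(\eta)f$. If $r=0$, the smallness condition \eqref{smallcd:1} (which is implied by \eqref{smallcd}) is met, so \eqref{w0:0} yields
\[
\| R^-(\eta)f\|_{\dot B^0_{\infty,1}} \le C \| |D_x|\eta\|_{\dot B^0_{\infty,1}} \| |D_x|f\|_{\dot B^0_{\infty,1}}.
\]
If $r>0$, the hypothesis \eqref{smallcd} gives the stronger condition needed for part 2 of Proposition \ref{lemm:elliptic}, and \eqref{w0} provides
\[
\| R^-(\eta)f\|_{\dot B^r_{\infty,1}} \le C \| |D_x|\eta\|_{\dot B^0_{\infty,1}} \| |D_x|f\|_{\dot B^r_{\infty,1}} + C \| |D_x|\eta\|_{\dot B^r_{\infty,1}} \| |D_x|f\|_{\dot B^0_{\infty,1}}.
\]
Either way, the smallness condition \eqref{smallcd} gives $\||D_x|\eta\|_{\dot B^0_{\infty,1}}<c_*\le 1$, so the factor $\||D_x|\eta\|_{\dot B^0_{\infty,1}}$ multiplying $\||D_x|f\|_{\dot B^r_{\infty,1}}$ can simply be absorbed into the constant $C$. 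Combining this with the first $\||D_x|f\|_{\dot B^r_{\infty,1}}$ term from the triangle inequality yields the stated bound \eqref{est:G} with a constant $C=C(d,r)$.

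There is no real obstacle here: the entire substance of the argument is the linearization and the remainder estimates \eqref{w0:0}, \eqref{w0}, which were the main technical content of Proposition \ref{lemm:elliptic}. The corollary is just a convenient repackaging that absorbs the linear $|D_x|f$ part into the nonlinear formulation and allows the coming fixed-point argument for the Muskat equation to treat the problem in a semilinear fashion.
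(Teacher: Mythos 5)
Your proof is correct and follows exactly the same route as the paper, which deduces the corollary directly from the linearization $G^-(\eta)f=|D_x|f+R^-(\eta)f$ together with the remainder bounds \eqref{w0:0} (for $r=0$) and \eqref{w0} (for $r>0$), absorbing the small factor $\||D_x|\eta\|_{\dot B^0_{\infty,1}}<c_*<1$ into the constant. The paper states this in a single sentence; your write-up is simply the expanded version of that same bookkeeping.
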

\subsection{Contraction}
We prove  contraction estimates for the Dirichlet-Neumann operator
\[
G^{-}(\eta_1)-G^{-}(\eta_2)=R^{-}(\eta_1)-R^{-}(\eta_2).
\]
\begin{prop}\label{prop:contraction}
 Let $r\ge 0$, $f \in \dot B^1_{\infty, 1}\cap\dot B^{r+1}_{\infty, 1}$ and $\eta_j\in  \dot B^1_{\infty, 1}\cap\dot B^{r+1}_{\infty, 1}$, $j=1, 2$. If $\eta_1$ and  $\eta_2$ satisfy \eqref{smallcd}, then there exists $C=C(d, r)$ such that
\bq\label{est:contraDN}
\begin{aligned}
  &\| [R^{-}(\eta_1)-R^{-}(\eta_2)]f\|_{\dot B^r_{\infty, 1}}\\
& \le  C\mathcal{A}_r\| |D_x|\eta_\delta\|_{\dot B^0_{\infty, 1}}\| |D_x|f\|_{\dot B^0_{\infty, 1}}+C\| |D_x|\eta_\delta\|_{\dot B^0_{\infty, 1}}\||D_x|f\|_{\dot B^{r}_{\infty, 1}}+C\| |D_x|\eta_\delta\|_{\dot B^r_{\infty, 1}}\||D_x|f\|_{\dot B^0_{\infty, 1}},
 \end{aligned}
 \eq
 where $\eta_\delta=\eta_1-\eta_2$ and 
 \bq\label{def:As}
 \mathcal{A}_s=\||D_x|\eta_1\|_{\dot B^s_{\infty, 1}}+\||D_x|\eta_2\|_{\dot B^s_{\infty, 1}}.
 \eq
\end{prop}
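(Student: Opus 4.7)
\textbf{Proof plan for Proposition \ref{prop:contraction}.}

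The plan is to mimic the fixed-point argument of Proposition \ref{lemm:elliptic} at the level of differences. For $j=1,2$, let $H_j(x,z)=e^{z|D_x|}\eta_j(x)$, let $v_j\in\cV_*\cap\cV^r$ be the fixed point of the operator $\cT_j$ associated to $\eta_j$ with common boundary datum $f$, and let $Q_{a,j}=Q_a[v_j]$, $Q_{b,j}=Q_b[v_j]$, computed with $H=H_j$. Set $v_\delta=v_1-v_2$, $H_\delta=H_1-H_2=e^{z|D_x|}\eta_\delta$, and denote by $K_j$ the operator \eqref{def:opK} built from $\eta_j$. Since the boundary data agree, the identity $v_j=e^{z|D_x|}f+K_j[v_j]$ yields
\begin{equation}\label{pf:diff}
v_\delta \;=\; K_1[v_1]-K_2[v_2] \;=\; K_1[v_\delta]\;+\;\bigl(K_1[v_2]-K_2[v_2]\bigr).
\end{equation}
In parallel, from \eqref{def:R},
\begin{equation}\label{pf:Rdiff}
[R^-(\eta_1)-R^-(\eta_2)]f \;=\; \int_{-\infty}^{0} e^{\tau|D_x|}|D_x|\bigl\{(Q_{b,1}-Q_{b,2})-(Q_{a,1}-Q_{a,2})\bigr\}(x,\tau)\,d\tau,
\end{equation}
so by the same computation as in \eqref{bound:w0} it suffices to bound $Q_{a,1}-Q_{a,2}$ and $Q_{b,1}-Q_{b,2}$ in $\wt L^1(J;\dot B^{1+r}_{\infty,1})$.

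The first key step is to estimate $v_\delta$ in $\cV_*$. Writing $K_1[v_2]-K_2[v_2]$ explicitly, we see it consists of terms of the form $Q_a[v_2]$ or $Q_b[v_2]$ with one copy of $H_1$ replaced by $H_\delta$, plus compositions involving $F(|D_x|H_j)=|D_x|H_j/(1+|D_x|H_j)$ whose differences are controlled by the smooth mean-value identity $F(a)-F(b)=(a-b)\int_0^1 F'(b+s(a-b))\,ds$. Applying the product rule \eqref{pr:CL} and the nonlinear estimate \eqref{est:F(u):CL} precisely as in the derivation of \eqref{estQ} (with $r=0$ and using $\|\cdot\|_{L^\infty(J;L^\infty)}\le \|\cdot\|_{\wt L^\infty(J;\dot B^0_{\infty,1})}$ when needed), I obtain
\begin{equation}\label{pf:Kdiff0}
\| K_1[v_2]-K_2[v_2]\|_{\cV_*} \;\le\; C\,\||D_x|\eta_\delta\|_{\dot B^0_{\infty,1}}\,\| v_2\|_{\cV_*}.
\end{equation}
Combining this with $\|K_1[v_\delta]\|_{\cV_*}\le \tfrac14 \|v_\delta\|_{\cV_*}$ (the contraction already proved under \eqref{smallcd:1}) and absorbing, \eqref{pf:diff} and \eqref{V*est:v} give
\begin{equation}\label{pf:vdelta*}
\| v_\delta\|_{\cV_*} \;\le\; C\,\||D_x|\eta_\delta\|_{\dot B^0_{\infty,1}}\,\||D_x|f\|_{\dot B^0_{\infty,1}}.
\end{equation}

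Next, I propagate this estimate to $\cV^r$. Splitting each product in $Q_{a,j}-Q_{a,i}$ and $Q_{b,j}-Q_{b,i}$ according to whether the $\delta$ falls on a factor involving $H$ or on $v$, the tame estimate \eqref{pr:CL}, the nonlinear bound \eqref{est:F(u):CL}, and \eqref{Duhamel1} for $H_j,H_\delta$ yield
\begin{equation}\label{pf:Qdiff}
\begin{aligned}
\|(Q_{a,1}-Q_{a,2},\,Q_{b,1}-Q_{b,2})\|_{\wt L^1(J;\dot B^{1+r}_{\infty,1})}
&\le C\,\||D_x|\eta_\delta\|_{\dot B^0_{\infty,1}}\|\na_{x,z}v_1\|_{X^r(J)}\\
&\quad +C\,\||D_x|\eta_\delta\|_{\dot B^r_{\infty,1}}\|\na_{x,z}v_1\|_{X_*(J)}\\
&\quad +C\,\bigl(\||D_x|\eta_1\|_{\dot B^0_{\infty,1}}+\||D_x|\eta_2\|_{\dot B^0_{\infty,1}}\bigr)\|\na_{x,z}v_\delta\|_{X^r(J)}\\
&\quad +C\,\mathcal{A}_r\,\|\na_{x,z}v_\delta\|_{X_*(J)}.
\end{aligned}
\end{equation}
Inserting \eqref{pf:Qdiff} into the identity obtained by applying $K_1$ to \eqref{pf:diff} and using \eqref{estdxK}, together with the $\cV^r$ bound \eqref{Xest:v} for $v_1$, the $\cV_*$ bound \eqref{V*est:v} for $v_1$, the estimate \eqref{pf:vdelta*} for $\|v_\delta\|_{\cV_*}$, and the smallness \eqref{smallcd} to absorb $\|v_\delta\|_{\cV^r}$ on the left, I obtain
\begin{equation}\label{pf:vdeltar}
\| v_\delta\|_{\cV^r} \;\le\; C\,\mathcal{A}_r\,\||D_x|\eta_\delta\|_{\dot B^0_{\infty,1}}\||D_x|f\|_{\dot B^0_{\infty,1}}+C\,\||D_x|\eta_\delta\|_{\dot B^0_{\infty,1}}\||D_x|f\|_{\dot B^r_{\infty,1}}+C\,\||D_x|\eta_\delta\|_{\dot B^r_{\infty,1}}\||D_x|f\|_{\dot B^0_{\infty,1}}.
\end{equation}
Finally, substituting \eqref{pf:vdeltar} together with \eqref{pf:vdelta*} back into \eqref{pf:Qdiff}, and bounding \eqref{pf:Rdiff} in $\dot B^r_{\infty,1}$ by the $\wt L^1(J;\dot B^{1+r}_{\infty,1})$ norm of the integrand exactly as in \eqref{bound:w0}, I arrive at \eqref{est:contraDN}.

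The main obstacle I anticipate is the bookkeeping of the nonlinear terms in $K_1[v_2]-K_2[v_2]$ (especially those coming from $F(|D_x|H)$), where one must carefully split differences to place exactly one $\delta$-factor per term so that the product rule \eqref{pr:CL} produces only bounds linear in $\eta_\delta$ or $v_\delta$; once this is executed, the remainder of the argument parallels the proofs of \eqref{estQ} and \eqref{estv:Vr} line by line.
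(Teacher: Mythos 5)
Your proposal is correct and follows essentially the same route as the paper's proof: you decompose $v_\delta$ via the common fixed-point identity so that each term carries exactly one $\eta_\delta$ or one $v_\delta$ factor, bound $v_\delta$ first in $\cV_*$ and then in $\cV^r$ by absorption under the smallness condition \eqref{smallcd}, and finally feed these back into the $\wt L^1(J;\dot B^{1+r}_{\infty,1})$ bound on the difference of the $Q$-terms, which controls $[R^-(\eta_1)-R^-(\eta_2)]f$ exactly as in \eqref{bound:w0}. The only cosmetic difference is that the paper writes $v_\delta=(K[v])_\delta$ and splits at the level of a typical product term such as $\nabla_xH_1\cdot\nabla_xv_\delta+\nabla_xH_\delta\cdot\nabla_xv_2$, while you make the equivalent split $v_\delta=K_1[v_\delta]+(K_1[v_2]-K_2[v_2])$ explicit at the operator level.
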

\begin{proof}
We recall from \eqref{def:R} that 
\[
R^-(\eta_j)f_j(x)=\int_{-\infty}^0e^{\tau |D_x|}|D_x|\{Q_{b, j}[v_j](x, \tau)-Q_{a, j}[v_j](x, \tau)\}d\tau,
\]
where $v_j$ is the unique fixed point of $\cT_j$ associated to $\eta_j$. Here we use the same notation as in subsection \ref{section:contDN} but with the index $j\in\{1, 2\}$. We use in addition the notation $g_\delta=g_1-g_2$. 

{\it Step 1.} Since $\| \cdot\|_{L^1\dot B^r_{\infty, 1}}=\| \cdot\|_{\wt L^1\dot B^r_{\infty, 1}}$, we deduce 
\[
A_r:=\| R^-(\eta_1)f_1-R^-(\eta_2)f_2\|_{\dot B^r_{\infty, 1}}\le C \| (Q_{b}[v])_\delta(x, \tau)-(Q_{a}[v])_\delta\|_{\wt L^1(J; \dot B^{r+1}_{\infty, 1})}.
\]
Let us consider the following typical term under the norm on the right-hand side
\[
T=\na_xH_1\cdot \na_xv_1-\na_xH_2\cdot \na_xv_2=\na_xH_1\cdot \na_xv_\delta+\na_xH_\delta\cdot \na_xv_2.
\]
 By the tame estimate \eqref{pr}, for $r\ge 0$ we have
\[
\begin{aligned}
\| T\|_{ \wt L^1(J; \dot B^{1+r}_{\infty, 1})}&\le C\|\na_xH_1\|_{L^\infty(J; L^\infty)}\|\na_xv_\delta\|_{\wt L^1(J; \dot B^{1+r}_{\infty, 1})}+ C\|\na_xH_1\|_{\wt L^1(J; \dot B^{r+1}_{\infty, 1})}\|\na_xv_\delta\|_{L^\infty(J; L^\infty)}\\
&\quad +C \|\na_xH_\delta\|_{L^\infty(J; L^\infty)}\|\na_xv_2\|_{\wt L^1(J; \dot B^{1+r}_{\infty, 1})}+ C\|\na_xH_\delta\|_{\wt L^1(J; \dot B^{r+1}_{\infty, 1})}\|\na_xv_2\|_{L^\infty(J; L^\infty)}.
\end{aligned}
\]
When $r=0$, this yields
\bq\label{estT:0}
\begin{aligned}
\| T\|_{ \wt L^1(J; \dot B^{1}_{\infty, 1})}&\le C\mathcal{A}_0\| v_\delta\|_{\cV_*}+C\| |D_x|\eta_\delta\|_{\dot B^0_{\infty, 1}}\| v_2\|_{\cV_*}\\
&\le C\big(\mathcal{A}_0\| v_\delta\|_{\cV_*}+C\| |D_x|\eta_\delta\|_{\dot B^0_{\infty, 1}}\| |D_x|f\|_{\B^0}\big):=E_*,
\end{aligned}
\eq
where we have used \eqref{V*est:v} for $v_2$. On the other hand, when $r>0$ we obtain
\bq\label{estT}
\begin{aligned}
&\| T\|_{ \wt L^1(J; \dot B^{1+r}_{\infty, 1})}\\
&\le C\Big\{\mathcal{A}_0\| v_\delta\|_{\cV^r}+\mathcal{A}_r\|v_\delta\|_{\cV_*}+\| |D_x|\eta_\delta\|_{\dot B^0_{\infty, 1}}\||D_x| f\|_{\dot B^{r}_{\infty, 1}}+\mathcal{A}_r\| |D_x|\eta_\delta\|_{\dot B^0_{\infty, 1}}\||D_x| f\|_{\dot B^0_{\infty, 1}}\\
&\quad+\| |D_x|\eta_\delta\|_{\dot B^r_{\infty, 1}}\||D_x| f\|_{\dot B^0_{\infty, 1}}\Big\} :=E_r,
\end{aligned}
\eq
where we have used \eqref{V*est:v} and \eqref{Xest:v} for $v_2$. We have proved that
\bq\label{estv:diff10}
\begin{aligned}
A_r\le C\| (Q_{b}[v])_\delta(x, \tau)-(Q_{a}[v])_\delta\|_{\wt L^1(J; \dot B^{r+1}_{\infty, 1})}\le \begin{cases} E_*\quad\text{when } r=0,\\ E_r\quad\text{when } r>0.\end{cases}
\end{aligned}
\eq 
{\it Step 2.} In view of \eqref{def:opK}, we have 
\[
v_\delta=(\cK[v])_\delta=\int_0^ze^{(z-z')|D_x|}\{w_\delta(x, z')+(Q_a[v])_\delta(x, z')\}dz'.
\]
The proof of \eqref{estK:X*:5} also gives 
\[
\| v_\delta\|_{\cV_*}\le C\|((Q_a[v])_\delta, (Q_b[v])_\delta)\|_{ \wt L^1(J; \dot B^{1}_{\infty, 1})}+\| (Q_a[v])_\delta\|_{L^\infty(J; L^\infty)}.
\]
Using the definition of $Q_a[v]$ we estimate
\[
\| (Q_a[v])_\delta\|_{L^\infty(J; L^\infty)}\le C\| |D_x|\eta_\delta\|_{\B^0}\| \na_{x, z}v_1\|_{L^\infty(J; L^\infty)}+C\| |D_x|\eta_2\|_{\B^0}\| \na_{x, z}v_\delta\|_{L^\infty(J; L^\infty)}\le E_*.
\]
Then upon recalling \eqref{estv:diff10}, we deduce from the two preceding estimates that
\[
\| v_\delta\|_{\cV_*}\le E_*=C\mathcal{A}_0\| v_\delta\|_{\cV^0}+C\| |D_x|\eta_\delta\|_{\dot B^0_{\infty, 1}}\| |D_x|f\|_{\B^0}.
\]
For $\eta_j$ satisfying \eqref{smallcd} with $c_*$ smaller if necessary so that $C\mathcal{A}_0\le 2Cc_*<1$, $\| v_\delta\|_{\cV^0}$ on the right-hand side can be absorbed by the right-hand side, giving
\bq\label{est:vdelta:V*}
\|  v_\delta\|_{\cV_*}\le C\| |D_x|\eta_\delta\|_{\dot B^0_{\infty, 1}}\| |D_x|f\|_{\dot B^0_{\infty, 1}}.
\eq
Plugging this back in $E_*$, we conclude in view of \eqref{estv:diff10} that 
\[
A_0\le C\| |D_x|\eta_\delta\|_{\dot B^0_{\infty, 1}}\| |D_x|f\|_{\dot B^0_{\infty, 1}}
\]
which is the estimate \eqref{est:contraDN} for $r=0$.

Next, let $r>0$. Analogously to \eqref{estdxK} we have
\bq
\| v_\delta\|_{\cV^r}\le C \| ((Q_a[v])_\delta, (Q_b[v])_\delta)\|_{\wt L^1(J; \dot B^{1+r}_{\infty, 1})}+\|  (Q_a[v])_\delta\|_{\wt L^\infty(J; \dot B^{r}_{\infty, 1})},
\eq
where
\begin{align*}
\|  (Q_a[v])_\delta\|_{\wt L^\infty(J; \dot B^{r}_{\infty, 1})}&\le C\| |D_x|\eta_\delta\|_{\B^r}\| \na_{x, z}v_1\|_{L^\infty(J; L^\infty)}+C\| |D_x|\eta_2\|_{\B^0}\| \na_{x, z}v_\delta\|_{\wt L^\infty(J; \B^r)}\\
&\le E_r.
\end{align*}
Therefore, it follows in view of \eqref{estv:diff10} that
\bq\label{vdelta:Vr}
\| v_\delta\|_{\cV^r}\le E_r.
\eq
Inserting \eqref{est:vdelta:V*} into $E_r$ on the right-hand side of \eqref{vdelta:Vr}, then absorbing $\| v_\delta\|_{\cV^r}$, we find 
\begin{align*}
\| v_\delta\|_{\cV^r} &\le C\mathcal{A}_r\| |D_x|\eta_\delta\|_{\dot B^0_{\infty, 1}}\| |D_x|f\|_{\dot B^0_{\infty, 1}}+C\| |D_x|\eta_\delta\|_{\dot B^0_{\infty, 1}}\||D_x|f\|_{\dot B^{r}_{\infty, 1}}\\
&\quad+C\| |D_x|\eta_\delta\|_{\dot B^r_{\infty, 1}}\||D_x|f\|_{\dot B^0_{\infty, 1}}.
\end{align*}
Using this bound for  $\| v_\delta\|_{\cV^r}$ in $E_r$, we deduce that $E_r$ obeys the same bound as $\| v_\delta\|_{\cV^r}$. Finally, \eqref{est:contraDN} follows from \eqref{estv:diff10}.
\end{proof}
\section{Proof of Theorem \ref{theo:global}}\label{section:proof}
We first prove a simple fixed-point lemma tailored to our problem.
\begin{lemm}\label{lemm:fixedpoint}
Let $(E, \| \cdot\|)$ be a Banach space, and let $\nu>0$. Denote by $B_\nu$ the closed ball of radius $\nu$ centered at $0$ in $E$.  Assume that $\mathcal{B}:E\times E\to E$ and there exists $\cF:\Rr^+\to \Rr^+$ such that
\bq\label{cB:4}
\nu\cF(2\nu)\le \mez
\eq
and the following conditions hold.
\begin{itemize}
\item For all $x\in B_\nu$, $\mathcal{B}(x, \cdot)$ is linear and 
\bq\label{cB:1}
\| \mathcal{B}(x, y)\|\le \cF(\| x\|)\|x\|\| y\|\quad\forall y\in B_\nu.
\eq
\item For all $x_1,~x_2,~y\in B_\nu$, 
\bq\label{cB:2}
\| \mathcal{B}(x_1, y)-B(x_2, y)\|\le \cF(\| x_1\|+\|x_2\|)\| x_1-x_2\|\|y\|.
\eq
\end{itemize}
Then, there exists $\delta=\delta(\nu, \cF)>0$ small enough such for all $x_0\in E$ with norm less than $\delta$, $x\mapsto x_0+\mathcal{B}(x, x)$ has a unique fixed point $x_*$ in  $B_\nu$ with $\|x_*\|\le 2\|x_0\|$.
\end{lemm}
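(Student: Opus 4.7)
The plan is to apply the Banach contraction principle to $\Phi(x) := x_0 + \mathcal{B}(x, x)$ on the closed ball $B_{2\|x_0\|}$, which lies inside $B_\nu$ whenever $\|x_0\| \le \nu/2$, and then to upgrade uniqueness from this small ball to the full ball $B_\nu$ by exploiting \eqref{cB:4}. Throughout the argument I treat $\mathcal{F}$ as nondecreasing (this is the natural interpretation; otherwise one replaces it by its nondecreasing envelope and, if needed, shrinks $\nu$ slightly to preserve \eqref{cB:4}). I then pick $\delta \in (0, \nu/2)$ small enough that $4\delta\,\mathcal{F}(4\delta) \le \tfrac{1}{4}$, which is possible since this quantity is bounded by $4\delta\,\mathcal{F}(2\nu)$ for $\delta \le \nu/2$ and hence tends to $0$ as $\delta \to 0^+$.

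Given $\|x_0\| < \delta$, I will first verify stability on $B_{2\|x_0\|}$. For $x$ in this ball, hypothesis \eqref{cB:1} and monotonicity give
\[
\|\Phi(x)\| \le \|x_0\| + \mathcal{F}(\|x\|)\|x\|^2 \le \|x_0\| + \mathcal{F}(4\delta)\cdot 2\|x_0\|\cdot \|x\| \le \|x_0\| + \tfrac{1}{4}\cdot 2\|x_0\| \le 2\|x_0\|.
\]
For the contraction step I exploit the linearity of $\mathcal{B}(x_2,\cdot)$ through the identity
\[
\Phi(x_1) - \Phi(x_2) = \bigl[\mathcal{B}(x_1, x_1) - \mathcal{B}(x_2, x_1)\bigr] + \mathcal{B}(x_2, x_1 - x_2),
\]
then apply \eqref{cB:2} to the bracket and the linear extension of \eqref{cB:1} to the second term (which is legitimate because $\mathcal{B}(x_2,\cdot)$ is linear on all of $E$, so the estimate extends from $B_\nu$ to every $y \in E$). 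This produces
\[
\|\Phi(x_1)-\Phi(x_2)\| \le \bigl[\mathcal{F}(\|x_1\|+\|x_2\|)\|x_1\| + \mathcal{F}(\|x_2\|)\|x_2\|\bigr]\|x_1-x_2\| \le 4\delta\,\mathcal{F}(4\delta)\|x_1-x_2\| \le \tfrac{1}{2}\|x_1-x_2\|.
\]
As $B_{2\|x_0\|}$ is a closed subset of the Banach space $E$, Banach's contraction theorem yields a unique fixed point $x_* \in B_{2\|x_0\|}$, which automatically satisfies $\|x_*\| \le 2\|x_0\|$.

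It remains to upgrade uniqueness from $B_{2\|x_0\|}$ to all of $B_\nu$. Here \eqref{cB:4} is used in an essential way: if $y \in B_\nu$ is any fixed point of $\Phi$, then \eqref{cB:1} combined with monotonicity and \eqref{cB:4} yields
\[
\|y\| \le \|x_0\| + \mathcal{F}(\|y\|)\|y\|^2 \le \|x_0\| + \mathcal{F}(2\nu)\,\nu\,\|y\| \le \|x_0\| + \tfrac{1}{2}\|y\|,
\]
so $\|y\| \le 2\|x_0\|$, forcing $y \in B_{2\|x_0\|}$ and therefore $y = x_*$. The main conceptual point, and the only mild subtlety in the argument, is precisely this two-scale structure: the contraction must be run on the small ball $B_{2\|x_0\|}$ (whose radius scales with the data), while the global condition \eqref{cB:4} is exactly what is needed to rule out spurious fixed points hiding in the larger region $B_\nu\setminus B_{2\|x_0\|}$.
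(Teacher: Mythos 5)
Your proof is correct and closely parallels the paper's own. Both arguments exploit the same two-scale structure: a contraction mapping (the paper runs the equivalent explicit iteration $x_{n+1}=x_0+\mathcal{B}(x_n,x_n)$) on the data-scaled ball $B_{2\|x_0\|}$, with the global smallness \eqref{cB:4} reserved exclusively for promoting uniqueness from that small ball to all of $B_\nu$; and both use the same bilinear decomposition of $\mathcal{B}(x_1,x_1)-\mathcal{B}(x_2,x_2)$ into a $\eqref{cB:2}$-piece and a $\eqref{cB:1}$-piece. The one genuine variation is the uniqueness step: the paper reapplies the difference estimate with one argument bounded by $2\delta$ and the other by $\nu$, obtaining $\|x_*-x'\|\le\frac34\|x_*-x'\|$ directly, whereas you use the growth estimate \eqref{cB:1} on any fixed point $y\in B_\nu$ to deduce $\|y\|\le 2\|x_0\|$, so that $y$ already lies in the small ball where Banach's theorem gives uniqueness. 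Your route is slightly cleaner and more sharply separates the two roles of \eqref{cB:4}. Your explicit observation that the bound \eqref{cB:1} extends by homogeneity from $y\in B_\nu$ to all $y\in E$ (since $\mathcal{B}(x,\cdot)$ is linear on $E$) patches a point the paper passes over silently when it applies \eqref{cB:1} to $x-y$, which need not lie in $B_\nu$; that is a worthwhile clarification. Both you and the paper also tacitly use that $\mathcal{F}$ is nondecreasing, which you are right to flag, since the lemma statement does not impose it.
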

\begin{proof}
For all $x,~y\in B_\nu$, combining \eqref{cB:1} and \eqref{cB:2} gives
\bq\label{cB:3}
\begin{aligned}
\| \mathcal{B}(x, x)-\mathcal{B}(y, y)\|&\le \| \mathcal{B}(x, x-y)\|+\|\mathcal{B}(x, y)-\mathcal{B}(y, y)\|\\
&\le \cF(\| x\|)\|x\|\|x-y\|+\cF(\| x\|+\|y\|)\|x-y\|\|y\|.
\end{aligned}
\eq
Let $\delta\in(0, \frac{\nu}{2})$ be such that $2\delta\cF(2\delta)< 1$. For $\| x_0\|<\delta$, we construct the fixed point of $x_0+\mathcal{B}(x, x)$ by the iterative scheme $x_{n+1}=x_0+\mathcal{B}(x_n, x_n)$, $n\ge 0$. Using \eqref{cB:1} and induction we find that $\| x_n\|< 2\| x_0\| <\nu$ for all $n\ge 0$. Then, \eqref{cB:3} implies
\begin{align*}
\| x_{n+1}-x_n\|&=\| \mathcal{B}(x_n, x_n)-\mathcal{B}(x_{n-1}, x_{n-1})\|\\
&\le 2\delta\cF(2\delta)\|x_n-x_{n-1}\|+2\delta\cF(4\delta)\|x_n-x_{n_1}\|.
\end{align*}
Choosing $\delta$ smaller so that $2\delta\cF(2\delta)+2\delta\cF(4\delta)\le \mez$, we obtain $\| x_{n+1}-x_n\|\le \mez\|x_n-x_{n_1}\|$. Thus the sequence $(x_n)$ converges to $x_*\in E$. In particular, $\| x_*\|\le 2\| x_0\|<\nu$ and it follows from \eqref{cB:3} that  $\|\mathcal{B}(x_n, x_n)-\mathcal{B}(x_*, x_*)\|\to 0$. Therefore, letting $n\to \infty$ in the iterative scheme yields $x_*=x_0+\mathcal{B}(x_*, x_*)$. If $x'$ is another fixed point in $B_\nu$ , then by \eqref{cB:3} and \eqref{cB:4},
\[
\| x_*-x'\|=\|\mathcal{B}(x_*, x_*)-\mathcal{B}(x', x')\|\le 2\delta\cF(2\delta)\|x_*-x'\|+\nu\cF(2\nu)\|x-y\|\le \frac{3}{4}\| x_*-x'\|,
\]
provided that $ 2\delta\cF(2\delta)\le \frac{1}{4}$. Therefore, $x_*$ is the unique fixed point in $B_\nu$.
\end{proof}
\subsection{The one-phase problem}
From \eqref{eq:eta} and \eqref{def:R}, the one-phase Muskat problem can be written as the nonlinear fractional heat equation
\bq\label{PDE:onephase}
\p_t\eta+\ka |D_x|\eta=-\ka R^-(\eta)\eta,\quad \eta\vert_{t=0}=\eta_0,
\eq
or equivalently
\bq\label{sol:eta}
\eta(t)=e^{-\ka t|D_x|}\eta_0-\mathcal{B}(\eta, \eta),
\eq
where
\[
\mathcal{B}(\eta, f)(x, t)=\ka \int_0^t e^{-\ka (t-\tau)|D_x|}(R^-(\eta)f)(\tau)d\tau.
\]
Let $X^1_\ka([0, T])$ denote the Banach space $X^1([0, T])$ (see \eqref{def:X}) endowed with the equivalent norm
\bq
\|\eta\|_{X^1_\ka([0, T])}=\| \eta\|_{\wt L^\infty([0, T]; \B^1)}+\ka\| \eta\|_{\wt L^1([0, T]; \B^2)}.
\eq
 By virtue of Lemma \ref{lemm:trace}, $X^1_k([0, T])\subset C([0, T]; \B^1)$. Then the global well-posedness statement in Theorem \ref{theo:global} is a consequence of the following lemma.
\begin{lemm}\label{proof:onephase}
 There exists a small number $\delta>0$ such that if  $\| \eta_0\|_{\B^1}<\delta$, then $e^{-\ka t|D_x|}\eta_0-\mathcal{B}(\eta, \eta)$ has a unique fixed point $\eta$ in $X^1_k([0, T])$ for any $T>0$ with norm less than $C\| \eta_0\|_{\B^1}$, $C=C(d)$.
 \end{lemm}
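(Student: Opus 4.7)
The plan is to apply the abstract fixed-point Lemma~\ref{lemm:fixedpoint} on the Banach space $E=X^1_\kappa([0,T])$ with $x_0=e^{-\kappa t|D_x|}\eta_0$ and with the bilinear-type map
\[
\mathcal{B}(\eta,f)(t)=\kappa\int_0^t e^{-\kappa(t-\tau)|D_x|}(R^-(\eta)f)(\tau)\,d\tau,
\]
which is linear in $f$ but non-linear in $\eta$ through $R^-(\eta)$. The crucial observation is that all $\kappa$'s will cancel in the estimates below, so the smallness $\delta$ will be independent of $\kappa$ and $T$. Note also that $\widetilde L^1([0,T];\dot B^s_{\infty,1})=L^1([0,T];\dot B^s_{\infty,1})$ by \eqref{compareCL} since $q=r=1$, and that $\widetilde L^\infty([0,T];\dot B^s_{\infty,1})\hookrightarrow L^\infty([0,T];\dot B^s_{\infty,1})$, so we may freely pass from Chemin-Lerner bounds to the $L^q_tL^p_x$ type bounds needed to integrate the pointwise-in-time estimates from Section~\ref{section:DN}.

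The first step is the free-evolution bound: applying \eqref{Duhamel1} with $\nu=\kappa$, $s=1$, and the exponents $(q_1,p,r)=(\infty,\infty,1)$ and $(q_1,p,r)=(1,\infty,1)$, one gets
\[
\|e^{-\kappa t|D_x|}\eta_0\|_{X^1_\kappa([0,T])}\le C\|\eta_0\|_{\dot B^1_{\infty,1}},
\]
uniformly in $\kappa$ and $T$. The second step is the bilinear bound. Apply \eqref{Duhamel2} with $q_2=1$ and $q_1\in\{1,\infty\}$, $s=1$, to the $t$-integral defining $\mathcal{B}$:
\[
\|\mathcal{B}(\eta,f)\|_{X^1_\kappa([0,T])}\le C\kappa\,\|R^-(\eta)f\|_{\widetilde L^1([0,T];\dot B^1_{\infty,1})}.
\]
Integrating the pointwise estimate \eqref{w0} with $r=1$ in time and using Hölder gives
\[
\|R^-(\eta)f\|_{\widetilde L^1\dot B^1_{\infty,1}}\le C\|\eta\|_{L^\infty\dot B^1_{\infty,1}}\|f\|_{L^1\dot B^2_{\infty,1}}+C\|\eta\|_{L^1\dot B^2_{\infty,1}}\|f\|_{L^\infty\dot B^1_{\infty,1}}\le \frac{C}{\kappa}\|\eta\|_{X^1_\kappa}\|f\|_{X^1_\kappa}.
\]
Combining, the factors of $\kappa$ cancel and one obtains $\|\mathcal{B}(\eta,f)\|_{X^1_\kappa}\le C\|\eta\|_{X^1_\kappa}\|f\|_{X^1_\kappa}$, which verifies \eqref{cB:1} with $\mathcal{F}$ a suitable nondecreasing function.

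The third step is the contraction: using \eqref{est:contraDN} with $r=1$ pointwise in $t$, integrating, distributing the $\dot B^2$-norm to the $L^1_t$-factor, and applying the same Duhamel estimate yields
\[
\|\mathcal{B}(\eta_1,f)-\mathcal{B}(\eta_2,f)\|_{X^1_\kappa}\le C\bigl(1+\|\eta_1\|_{X^1_\kappa}+\|\eta_2\|_{X^1_\kappa}\bigr)\|\eta_1-\eta_2\|_{X^1_\kappa}\|f\|_{X^1_\kappa},
\]
again with a $\kappa$-free constant; this gives \eqref{cB:2} with $\mathcal{F}(r)=C(1+r)$. Finally, fix $\nu=\nu(d)>0$ small enough that $\nu\mathcal{F}(2\nu)\le\tfrac12$ and also $2\nu<c_*$, where $c_*=c_*(d,1)$ is the threshold from Proposition~\ref{lemm:elliptic} (recall $\||D_x|\eta\|_{\dot B^0_{\infty,1}}=\|\eta\|_{\dot B^1_{\infty,1}}\le\|\eta\|_{X^1_\kappa}$). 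Lemma~\ref{lemm:fixedpoint} then provides a $\delta=\delta(d)>0$ such that whenever $\|\eta_0\|_{\dot B^1_{\infty,1}}<\delta/C$ one has a unique fixed point $\eta$ in $B_\nu\subset X^1_\kappa([0,T])$ with $\|\eta\|_{X^1_\kappa}\le 2\|e^{-\kappa t|D_x|}\eta_0\|_{X^1_\kappa}\le C\|\eta_0\|_{\dot B^1_{\infty,1}}$, for every $T>0$.

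The main obstacle here is not analytic difficulty but bookkeeping: one must ensure that the smallness threshold $\delta$ is independent of both $\kappa$ and $T$, which is possible precisely because the dissipative gain $1/\kappa$ in \eqref{Duhamel2} exactly compensates the prefactor $\kappa$ in the definition of $\mathcal{B}$, and because the estimates \eqref{w0}, \eqref{est:contraDN} are algebraically balanced so that every term on the right-hand side distributes into one $L^\infty_t\dot B^1_x$-factor and one $L^1_t\dot B^2_x$-factor. Once this balance is observed, the fixed-point argument is routine.
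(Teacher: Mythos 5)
Your proposal is correct and follows essentially the same route as the paper: it invokes Lemma~\ref{lemm:fixedpoint} on $E=X^1_\kappa([0,T])$ with $\nu$ set below the smallness threshold $c_*$ from Proposition~\ref{lemm:elliptic}, derives the bilinear bound by combining \eqref{Duhamel1}--\eqref{Duhamel2} (cancelling all $\kappa$'s) with \eqref{w0} at $r=1$, and derives the Lipschitz bound from the contraction estimate \eqref{est:contraDN} at $r=1$. The paper's own proof records the same pair of inequalities as \eqref{check:cB1} and \eqref{check:cB2} and then applies the abstract lemma with $\mathcal{F}(z)=C(z+2)$, which is interchangeable with your $\mathcal{F}(r)=C(1+r)$.
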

\begin{proof}
Let $T>0$ be arbitrary.  We appeal to Lemma \ref{lemm:fixedpoint} with $E=X^1_\ka([0, T])$ and $\nu=c_*/2$, where $c_*$ is given in \eqref{smallcd}. Note that if $\| \eta\|_{X^1_\ka([0, T])}<c_*$, then $\| \eta\|_{C([0, T]; \B^1)}<c_*$, so that for all $t\in [0, T]$, $\eta(t, \cdot)$ satisfies the smallness condition \eqref{smallcd}. In addition, in view of \eqref{Duhamel1} we have
\[
\| e^{-\ka t|D_x|}\eta_0\|_{X^1_\ka([0, T])}\le C_1\| \eta_0\|_{\B^1},\quad C_1=C_1(d).
\]
Applying the parabolic estimate \eqref{Duhamel2}  with  $(q_1, q_2)=(\infty, 1)$ and $(q_1, q_2)=(1, 1)$, we obtain
\[
\| \mathcal{B}(\eta, f)\|_{X^1_\ka([0, T])}\le C\ka\|R^-(\eta)f\|_{\wt L^1([0, T]; \B^1)}=C\ka\|R^-(\eta)f\|_{L^1([0, T]; \B^1)}.
\]
Recall from \eqref{w0} that
\[
\|R^-(\eta)f\|_{\B^1}\le C\|  |D_x|\eta\|_{\dot B^0_{\infty, 1}}\||D_x| f\|_{\dot B^1_{\infty, 1}}+C\|  |D_x|\eta\|_{\dot B^1_{\infty, 1}}\| |D_x|f\|_{\dot B^0_{\infty, 1}}.
\]
It follows that 
\bq\label{check:cB1}
\begin{aligned}
\| \mathcal{B}(\eta, f)\|_{X^1_\ka([0, T])}& \le C\|  |D_x|\eta\|_{L^\infty_t\dot B^0_{\infty, 1}}\||D_x| f\|_{L^1_t\dot B^1_{\infty, 1}}+C\|  |D_x|\eta\|_{L^1_t\dot B^1_{\infty, 1}}\| |D_x|f\|_{L^\infty_t\dot B^0_{\infty, 1}}\\
&\le C\|\eta\|_{X^1_\ka([0, T])}\|f\|_{X^1_\ka([0, T])},\quad C=C(d),
\end{aligned}
\eq
where we have used \eqref{compareCL} in the second inequality. 

Similarly, we have
\[
\| \mathcal{B}(\eta_1, f)-\mathcal{B}(\eta_2, f)\|_{X^1_\ka([0, T])}\le C\ka\|R^-(\eta_1)f-R^-(\eta_2)f\|_{\wt L^1([0, T]; \B^1)}. 
\]
Recall the contraction estimate \eqref{est:contraDN}:
 \begin{align*}
 &\|R^-(\eta_1)f-R^-(\eta_2)f\|_{\B^1}\\
 &\le C\mathcal{A}_1\| |D_x|\eta_\delta\|_{\dot B^0_{\infty, 1}}\| |D_x|f\|_{\dot B^0_{\infty, 1}}+C\| |D_x|\eta_\delta\|_{\dot B^0_{\infty, 1}}\||D_x|f\|_{\dot B^{1}_{\infty, 1}}+C\| |D_x|\eta_\delta\|_{\dot B^1_{\infty, 1}}\||D_x|f\|_{\dot B^0_{\infty, 1}},
 \end{align*}
 where $C=C(d)$ and $ \mathcal{A}_1=\||D_x|\eta_1\|_{\dot B^1_{\infty, 1}}+\||D_x|\eta_2\|_{\dot B^1_{\infty, 1}}$. We deduce
 \bq\label{check:cB2}
 \begin{aligned}
 \| \mathcal{B}(\eta_1, f)-\mathcal{B}(\eta_2, f)\|_{X^1_\ka([0, T])}&\le C\Big(\| \eta_1\|_{X^1_\ka([0, T])}+ \|\eta_2\|_{X^1_\ka([0, T])}+2\Big)\| \eta_\delta\|_{X^1_\ka([0, T])}\| f\|_{X^1_\ka([0, T])}.
 \end{aligned}
 \eq
In view of \eqref{check:cB1} and \eqref{check:cB2}, $\mathcal{B}$ satisfies the conditions \eqref{cB:1} and \eqref{cB:2} in Lemma \ref{lemm:fixedpoint} with $\cF(z)=C(z+2)$. Finally, choosing $c_*$ smaller if necessary so that $\nu=c_*/2$ satisfies \eqref{cB:4}, we can apply Lemma \ref{lemm:fixedpoint} to conclude the proof.
\end{proof}
Let $\eta_0^j\in \B^1$, $j=1, 2$ be initial data with norm less than $\delta$ given in Lemma \ref{proof:onephase}. Denoting by $\eta^j$ the corresponding solutions. Then the estimates \eqref{check:cB1} and \eqref{check:cB2} imply 
\[
\begin{aligned}
\| \eta^1-\eta^2\|_{X^1_k([0, T])}&\le \| e^{-\ka t|D_z|}(\eta^1_0-\eta^2_0)\|_{X^1_\ka([0, T])}+\| \mathcal{B}(\eta^1, \eta^1)-\mathcal{B}(\eta^2, \eta^2)\|_{X^1_\ka([0, T])}\\
&\le C\|\eta^1_0-\eta^2_0\|_{\B^1}+C\|\eta^1\|_{X^1_\ka([0, T])}\| \eta^1-\eta^2\|_{X^1_\ka([0, T]}\\
&\quad+C\| \eta^1-\eta^2\|_{X^1_\ka([0, T])}\| \eta^2\|_{X^1_\ka([0, T])},
\end{aligned}
\]
where we have used that $\| \eta^j\|_{X^1_\ka([0, T])}\le 2\delta<1$. Therefore, choosing $\delta$ smaller if necessary we obtain the stability estimate 
\bq
\| \eta^1-\eta^2\|_{X^1_k([0, T])}\le C\| \eta^1_0-\eta^2_0\|_{\B^1}\quad\forall T>0,\quad C=C(d).
\eq 
This shows that the solution map is Lipschitz continuous in the topology of $\B^1$, thereby establishing global well-posedness  in the strong sense of Hadamard.
\subsection{The two-phase problem}
Let us recall the Dirichlet-Neumann reformulation of the two-phase Muskat problem:
\begin{align}\label{eq:eta2p2}
&\p_t\eta=-\frac{1}{\mu^-}G^{-}(\eta)f^-,\\
\label{system:fpm2}
&\begin{cases}
 f^+-f^-= -\lb \rho\rb\eta,\\
\frac{1}{\mu^+}G^+(\eta)f^+-\frac{1}{\mu^-}G^-(\eta)f^-=0.
\end{cases}
\end{align}
Substituting $f^+$ from the first equation in \eqref{system:fpm2}, the second equation becomes 
\[
\frac{1}{\mu^-}G^-(\eta)f^-=\frac{1}{\mu^+}G^+(\eta)f^--\frac{\lb \rho\rb}{\mu^+}G^+(\eta)\eta.
\]
Then setting 
\bq\label{def:Rpm}
R^\pm(\eta)g=G^\pm(\eta)g\mp  |D_x|g,
\eq
we obtain after arranging terms
\bq\label{eq:f-:0}
|D_x|f^-=\frac{1}{\mu^++\mu^-}(\mu^-R^+(\eta)-\mu^+R^-(\eta))f^--\frac{\lb \rho\rb \mu^-}{\mu^++\mu^-}G^+(\eta)\eta.
\eq
Plugging this into the linearization $G^-(\eta)f^-=|D_x|f^-+R^-(\eta)f^-$, it follows in view of \eqref{eq:eta2p2} that
\[
\begin{aligned}
\p_t \eta
&=-\frac{1}{\mu^++\mu^-}(R^+(\eta)+R^-(\eta))f^--\frac{\lb \rho\rb}{\mu^++\mu^-}|D_x|\eta+\frac{\lb \rho\rb}{\mu^++\mu^-}R^+(\eta)\eta.
\end{aligned}
\]
In the stable case $\rho^+<\rho^-$, $\ka=\frac{\lb \rho\rb}{\mu^++\mu^-}>0$ and $\eta$ obeys the nonlinear fractional heat equation
\bq\label{PDE:twophase}
\p_t \eta+\ka|D_x|\eta=-\ka (R^+(\eta)+R^-(\eta))\frac{f^-}{\lb \rho\rb}+\ka R^+(\eta)\eta.
\eq
In order for \eqref{PDE:twophase} to be a self-contained equation, it remains to determine $f^-$ from $\eta$. From \eqref{eq:f-:0} we see that $f^-$ is a fixed point of  
\bq
\begin{aligned}
\mathcal{K}(\eta)g=\frac{1}{\mu^++\mu^-}(\mu^-|D_x|^{-1}R^+(\eta)-\mu^+|D_x|^{-1}R^-(\eta))g-\frac{\lb \rho\rb \mu^-}{\mu^++\mu^-}|D_x|^{-1}G^+(\eta)\eta.
\end{aligned}
\eq
\begin{prop}\label{prop:f-}
Let $r\ge 0$ and let $\eta\in  \dot B^1_{\infty, 1}\cap\dot B^{r+1}_{\infty, 1}$. If $\eta$ satisfies \eqref{smallcd} then $\mathcal{K}(\eta)$ has a unique fixed point $f^-$ in $\dot B^1_{\infty, 1}\cap \dot B^{1+r}_{\infty, 1}$ and
\bq\label{est:f-}
 \| f^-\|_{\dot B^{r}_{\infty, 1}}\le C(d, r)\lb\rho\rb \| |D_x|\eta\|_{\dot B^{r}_{\infty, 1}}.
\eq
\end{prop}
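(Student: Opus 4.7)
\textbf{Proof plan for Proposition \ref{prop:f-}.} The approach is to view $\cK(\eta)$ as an affine map $g \mapsto L(\eta) g + N(\eta)$ where
\[
L(\eta) g := \tfrac{1}{\mu^++\mu^-}\bigl(\mu^-|D_x|^{-1}R^+(\eta)-\mu^+|D_x|^{-1}R^-(\eta)\bigr)g,\quad N(\eta) := -\tfrac{\lb\rho\rb\mu^-}{\mu^++\mu^-}|D_x|^{-1}G^+(\eta)\eta.
\]
Since $L(\eta)$ is linear in $g$, the plan is to produce $f^-$ as a fixed point in $\dot B^1_{\infty,1}$ by the standard Banach contraction principle and then upgrade its regularity to $\dot B^{r+1}_{\infty,1}$ via the two-space fixed-point Lemma \ref{lemm:fp}.

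For the linear part I would apply the boundedness estimate \eqref{w0} (and its $R^+(\eta)$ analogue) with $s=0$ and $s=r$, together with the identity $\||D_x|^{-1}u\|_{\dot B^{s+1}_{\infty,1}}=\|u\|_{\dot B^s_{\infty,1}}$, to obtain
\[
\|L(\eta)g\|_{\dot B^1_{\infty,1}} \le C\||D_x|\eta\|_{\dot B^0_{\infty,1}}\|g\|_{\dot B^1_{\infty,1}},
\]
\[
\|L(\eta)g\|_{\dot B^{r+1}_{\infty,1}} \le C\||D_x|\eta\|_{\dot B^0_{\infty,1}}\|g\|_{\dot B^{r+1}_{\infty,1}}+C\||D_x|\eta\|_{\dot B^r_{\infty,1}}\|g\|_{\dot B^1_{\infty,1}}.
\]
Shrinking $c_*$ in \eqref{smallcd} so that $C\||D_x|\eta\|_{\dot B^0_{\infty,1}}\le 1/4$ makes $L(\eta)$ a strict contraction on $\dot B^1_{\infty,1}$ and simultaneously verifies \eqref{boundE1}--\eqref{boundE2} of Lemma \ref{lemm:fp} with $E_1=\dot B^1_{\infty,1}$, $E_2=\dot B^{r+1}_{\infty,1}$. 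For the source, the linearization $G^+(\eta)\eta=-|D_x|\eta+R^+(\eta)\eta$ gives $N(\eta)=\tfrac{\lb\rho\rb\mu^-}{\mu^++\mu^-}\bigl(\eta-|D_x|^{-1}R^+(\eta)\eta\bigr)$, and another application of \eqref{w0} yields $\|N(\eta)\|_{\dot B^1_{\infty,1}}\le C\lb\rho\rb\|\eta\|_{\dot B^1_{\infty,1}}$ and $\|N(\eta)\|_{\dot B^{r+1}_{\infty,1}}\le C\lb\rho\rb\|\eta\|_{\dot B^{r+1}_{\infty,1}}$.

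The Banach contraction principle then delivers a unique $f^-\in\dot B^1_{\infty,1}$ with $f^-=L(\eta)f^-+N(\eta)$ and $\|f^-\|_{\dot B^1_{\infty,1}}\le 2\|N(\eta)\|_{\dot B^1_{\infty,1}}\le C\lb\rho\rb\|\eta\|_{\dot B^1_{\infty,1}}$. Invoking Lemma \ref{lemm:fp} with $K=L(\eta)$ and $u_0=N(\eta)$ places $f^-$ in $\dot B^{r+1}_{\infty,1}$ as well. Plugging the fixed-point identity into the $\dot B^{r+1}$ estimate for $L(\eta)f^-$ and absorbing $\|f^-\|_{\dot B^{r+1}_{\infty,1}}$ on the right then yields
\[
\|f^-\|_{\dot B^{r+1}_{\infty,1}}\le C(d,r)\lb\rho\rb\|\eta\|_{\dot B^{r+1}_{\infty,1}}=C(d,r)\lb\rho\rb\||D_x|\eta\|_{\dot B^r_{\infty,1}},
\]
which gives the asserted bound.

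The delicate point is ensuring the contraction constant $\alpha_2$ in \eqref{boundE2} is strictly less than $1$ uniformly in $r$. This works precisely because the leading coefficient $C\||D_x|\eta\|_{\dot B^0_{\infty,1}}$ multiplying $\|g\|_{\dot B^{r+1}_{\infty,1}}$ in the $R^\pm(\eta)$ estimate \eqref{w0} is $r$-independent, so a single smallness threshold $c_*$ works for every $r\ge 0$. The remaining computations are bookkeeping of constants.
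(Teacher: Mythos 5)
Your proposal is correct and follows essentially the same route as the paper: both split $\cK(\eta)$ into a linear part $\cK_1$ and source $u_0=-\tfrac{\lb\rho\rb\mu^-}{\mu^++\mu^-}|D_x|^{-1}G^+(\eta)\eta$, bound both via the $R^\pm$ estimate \eqref{w0} (the paper packages the source bound as \eqref{est:G} rather than re-expanding $G^+(\eta)\eta$ as you do, but that is the same computation), and invoke Lemma \ref{lemm:fp} with $E_1=\dot B^1_{\infty,1}$, $E_2=\dot B^{1+r}_{\infty,1}$ before bootstrapping the fixed-point identity to absorb the $\dot B^{1+r}$ term. The only cosmetic point is the Besov index in \eqref{est:f-}: your final bound reads $\|f^-\|_{\dot B^{1+r}_{\infty,1}}\le C\lb\rho\rb\||D_x|\eta\|_{\dot B^r_{\infty,1}}$, which is the natural (and correct) conclusion and matches what the paper's argument actually yields.
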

\begin{proof}
We apply Lemma \ref{lemm:fp} with $E_1=\dot B^1_{\infty, 1}$, $E_2=\dot B^{1+r}_{\infty, 1}$ and
\[
u_0=-\frac{\lb \rho\rb\mu^-}{\mu^++\mu^-}|D_x|^{-1}G^+(\eta)\eta\in E_1\cap E_2
\]
since, under condition \eqref{smallcd}, \eqref{est:G} gives
\[
 \| u_0\|_{\dot B^{1+r}_{\infty, 1}}\le  C\lb \rho\rb\| |D_x|\eta\|_{\dot B^r_{\infty, 1}},\quad C=C(d, r).
\]
 It then suffices to prove that the mapping 
\[
g\mapsto \cK_1(g):= \frac{1}{\mu^++\mu^-}(\mu^-|D_x|^{-1}R^+(\eta)-\mu^+|D_x|^{-1}R^-(\eta))g
\]
satisfies \eqref{boundE1} and \eqref{boundE2}. Indeed, under condition \eqref{smallcd} with $c_*$ smaller if necessary, we can apply \eqref{w0}  to have
\begin{align*}
\| \cK_1(\eta)g\|_{\dot B^{1+r}_{\infty, 1}}&\le \| |D_x|^{-1}R^-(\eta)\|_{\dot B^{1+r}_{\infty, 1}}+\| |D_x|^{-1}R^+(\eta)\|_{\dot B^{1+r}_{\infty, 1}}\\
&\le C\| |D_x|\eta\|_{\dot B^0_{\infty, 1}}\| g\|_{\dot B^{1+r}_{\infty, 1}}+C\| |D_x|\eta\|_{\dot B^r_{\infty, 1}}\| g\|_{\dot B^1_{\infty, 1}},\quad C=C(d, r).
\end{align*}
Thus there exists a unique fixed point $g\in \dot B^1_{\infty, 1}\cap \dot B^{1+r}_{\infty, 1}$ for $\cK$; moreover,
\bq\label{est:CK}
\| g\|_{\dot B^{1+r}_{\infty, 1}}\le C\lb \rho\rb\| |D_x|\eta\|_{\dot B^r_{\infty, 1}}+C\| |D_x|\eta\|_{\dot B^0_{\infty, 1}}\| g\|_{\dot B^{1+r}_{\infty, 1}}+C\| |D_x|\eta\|_{\dot B^r_{\infty, 1}}\| g\|_{\dot B^0_{\infty, 1}}.
\eq
 When $r=0$, combining \eqref{est:CK} and \eqref{smallcd} yields
\[
\| g\|_{\dot B^0_{\infty, 1}}\le C(d)\lb \rho\rb\| |D_x|\eta\|_{\dot B^0_{\infty, 1}}.
\]
Plugging this into \eqref{est:CK} and using \eqref{smallcd} again we find that for $r\ge 0$,
\[
\| g\|_{\dot B^{r}_{\infty, 1}}\le C(d, r)\lb \rho\rb\| |D_x|\eta\|_{\dot B^r_{\infty, 1}}
\]
which completes the proof.
\end{proof}
Assume that $\eta_1$ and $\eta_2$ satisfy \eqref{smallcd}, and $f^-_j$  is the fixed point of $\mathcal{K}(\eta_j)$. Set $\eta_\delta=\eta_1-\eta_2$ and $f_\delta=f_1-f_2$. It follows that
\begin{align*}
\| |D_x|f_\delta\|_{\B^r}&\le \| R^+(\eta_1)f^-_1-R^+(\eta_2)f^-_2\|_{\B^r}+\| R^-(\eta_1)f^-_1-R^-(\eta_2)f^-_2\|_{\B^r}\\
&\quad+\ka \| G^+(\eta_1)\eta_1-R^+(\eta_2)\eta_2\|_{\B^r}.
\end{align*}
Using the boundedness \eqref{w0} for $R^\pm(\eta_j)$ and the contraction \eqref{est:contraDN} for $R^\pm(\eta_1)-R^\pm(\eta_2)$, we easily deduce
\bq\label{contraction:f-}
\| |D_x|f_\delta\|_{\B^r}\le C\lb \rho \rb\big(\mathcal{A}_r\| |D_x|\eta_\delta\|_{\B^0}+\| |D_x|\eta_\delta\|_{\B^r}\big),
\eq
where $\mathcal{A}_r$ is given by \eqref{def:As}.

With $f^-$ given in Proposition \ref{prop:f-} and satisfying the estimates \eqref{est:f-} and \eqref{contraction:f-}, the nonlinearities $R^\pm(\eta)f^-$ and  $R^+(\eta)\eta$ in \eqref{PDE:twophase} obey the same bounds as for $R^-(\eta)\eta$ in the one-phase equation \eqref{PDE:onephase}. Therefore, the proof of Lemma \ref{proof:onephase} applies, leading to the global well-posedness for the the two-phase problem with small data in $\B^1$. 

\vspace{.1in}
\noindent{\bf{Acknowledgment.}} H. Q. Nguyen was partially supported by NSF grant DMS-190777. The author would like to thank Benoit Pausader for generously sharing many ideas. 

 
\end{document}